\newtheorem{theorem}{Theorem}[section]
\newtheorem{proposition}[theorem]{Proposition}
\newtheorem{corollary}[theorem]{Corollary}
\theoremstyle{definition}
\newtheorem{definition}[theorem]{Definition}
\newtheorem{example}[theorem]{Example}
\theoremstyle{remark}
\newtheorem{remark}[theorem]{Remark}
\numberwithin{equation}{section}
\begin{document}
	
	\setcounter{page}{1}
	
	\title[Continuous biframes in Hilbert Spaces]{Continuous biframes in Hilbert Spaces}

	\author[H. Massit, R. Eljazzar, M. Rossafi]{Hafida Massit$^{1}$, Roumaissae Eljazzar$^{1}$ and Mohamed Rossafi$^{2*}$}
	
	\address{$^{1}$Department of Mathematics, University of Ibn Tofail, B.P. 133, Kenitra, Morocco}
	\email{\textcolor[rgb]{0.00,0.00,0.84}{massithafida@yahoo.fr; roumaissae.eljazzar@uit.ac.ma}}
	\address{$^{2}$Department of Mathematics Faculty of Sciences, Dhar El Mahraz University Sidi Mohamed Ben Abdellah, Fes, Morocco}
	\email{\textcolor[rgb]{0.00,0.00,0.84}{rossafimohamed@gmail.com}}

	\subjclass[2010]{41A58,  42C15, 46L05.}
	
	\keywords{Continuous Biframes, B-Riesz bases, Controlled Continous frames.}
	
	\date{
		\newline \indent $^{*}$Corresponding author}

	\begin{abstract} 	In this paper, we present the concept of continuous biframes in a Hilbert space. We examine the essential properties of biframes with an emphasis on the biframe operator. Moreover, we introduce a new type of Riesz bases, referred to as continuous biframe-Riesz bases.
	\end{abstract}
	\maketitle

\baselineskip=12.4pt

	\section{Introduction }

The notion of frames within Hilbert spaces was first proposed by Duffin and Schaffer in 1952 \cite{Duf}, aimed at addressing complex issues in nonharmonic Fourier series. Following the seminal work \cite{DGM} by Daubechies, Grossman, and Meyer, the theory of frames gained extensive application, especially in the specialized areas of wavelet frames and Gabor frames.

The generalization of frames was introduced by G. Kaiser \cite{KAI} and concurrently by Ali, Antoine, and Gazeau \cite{GAZ}, defining them as a family indexed by a locally compact space equipped with a Radon measure, known as continuous frames. Gabardo and Han referred to these as frames associated with measurable spaces in \cite{GHN}. Similarly, Askari-Hemmat, Dehghan, and Radjabalipour named them generalized frames in \cite{AHDR}. In the field of mathematical physics, they are often termed Coherent states \cite{COH}. For additional insights into frames, refer to \cite{Giati, Massit, FR1, RFDCA, r1, r3, r5, r6}.

	By introducing the biframe operator associated with a biframe, we explore biframes' properties through the lens of operator theory. We categorize biframes whose component sequences constitute Bessel sequences, frames, and Riesz bases. Additionally, we derive new bases that bear close relation to orthonormal bases and Riesz bases.
	
	\section{preliminaries}

	In this paper, the symbol $\mathcal{X}$ represents a separable Hilbert space. The expression $ B(\mathcal{X},\mathcal{Y}) $ refers to the collection of all bounded linear operators from $ \mathcal{X} $ to another Hilbert space $ \mathcal{Y} $. When $ \mathcal{X} $ equals $ \mathcal{Y} $, we denote this collection as $ B(\mathcal{X}) $. The identity operator on $ \mathcal{X} $ is represented by $ \mathcal{I} $. Furthermore, $ GL(\mathcal{X}) $ denotes the set of all invertible, bounded linear operators on $ \mathcal{X} $, with $ GL^{+}(\mathcal{X}) $ signifying the subset of $ GL(\mathcal{X}) $ comprising positive operators.

In this work, we present the concept of a mean for a continuous biframe. Our anticipation is that various results from frame theory will extend to these frames. To facilitate this generalization, we first outline some fundamental results and theorems related to these frames. Subsequently, we delve into topics such as the duality of these frames, the perturbation of continuous frames, and the resilience of these frames against the removal of certain elements.
	\begin{definition} \cite{5}
		Consider $\mathcal{X}$ as a complex Hilbert space, and $(\Omega,\mu)$ as a measure space endowed with a positive measure $\mu$.
	
	A mapping $\Xi : \Omega \rightarrow \mathcal{X}$ is defined as a continuous frame in respect to $(\Omega,\mu)$ if it satisfies the following criteria:
	\begin{itemize}
		\item [1]- $\Xi$ is weakly measurable, meaning that for any $\xi \in \mathcal{X}$, the mapping $\omega \rightarrow \langle \xi, \Xi(\omega) \rangle$ is a measurable function over $\Omega$.
		\item [2]- There are two constants $C,D > 0$, ensuring that for every $\xi \in \mathcal{X}$:
		\begin{equation}\label{d1eq1}
		C\|\xi\|^2 \leq \int_{\Omega}|\langle \xi, \Xi(\omega) \rangle|^2 d\mu(\omega) \leq D\|\xi\|^2.
		\end{equation}
	\end{itemize}
	
	\end{definition}


The frame operator $ \mathcal{T}_{\Xi}: \mathcal{X} \rightarrow \mathcal{X} $ is defined by $ \mathcal{T}_{\Xi}(\xi)= \int_{\Omega} \langle \xi, \Xi_{\omega}\rangle \Xi_{\omega} d\mu(\omega) $
and is a self-adjoint operator that belongs to $ GL^{+}(\mathcal{X}) $.

It is noteworthy that two Bessel sequences $\{ \Xi_{\omega} \}$ and $ \{\Phi_{\omega}\} $ are dual frames for $\mathcal{X}$ if any of the following conditions is satisfied:
\begin{itemize}
	\item [(i)] $ \xi= \int_{\Omega} \langle \xi,\Phi_{\omega}\rangle \Xi_{\omega}  d\mu(\omega),\;\forall \xi \in\mathcal{X}$.
	\item[(ii)]$ \xi= \int_{\Omega} \langle \xi,\Xi_{\omega}\rangle \Phi_{\omega}  d\mu(\omega),\;\forall \xi \in\mathcal{X}$.
	\item[(iii)] $ \langle \xi,\eta \rangle = \int_{\Omega} \langle \xi,\Xi_{\omega} \rangle \langle \Phi_{\omega}, \eta \rangle d\mu(\omega),\;\forall \xi,\eta \in\mathcal{X} $.
\end{itemize}

A Riesz basis in the Hilbert space $\mathcal{X}$ is characterized as a set $\{Ue_{\omega}\}$, where $\{e_{\omega}\}$ denotes an orthonormal basis for $\mathcal{X}$, and $U$ is a member of $GL(\mathcal{X})$.

Additionally, the sequences $\{\Xi_{\omega}\}$ and $\{\Phi_{\omega}\}$ in $\mathcal{X}$ are termed biorthogonal if $\langle \Xi_{i}, \Phi_{j} \rangle = \delta_{i,j}$, signifying orthogonality and normalization between corresponding elements of the two sequences.


\begin{definition}\cite{PDA}
	In a measure space $(\Omega,\mu)$ endowed with a positive measure $\mu$ and given $\mathcal{P}\in GL(\mathcal{X})$, a map $\Xi:\Omega\rightarrow \mathcal{X}$ is termed a $\mathcal{P}$-controlled continuous frame if it meets the following condition: there exist constants $0<C\leq D<\infty$ such that for every $\xi\in \mathcal{X}$, the inequality
	\begin{equation*}
	C\|\xi\|^{2}\leq \int_{\Omega}\langle \xi,\Xi(w)\rangle \langle \mathcal{P}\Xi(w),\xi\rangle \ d\mu(w)\leq D\|\xi\|^{2}
	\end{equation*}
	holds true.
\end{definition}

\begin{definition}\cite{PDA}
	Consider $(\Omega,\mu)$ as a measure space equipped with a positive measure $\mu$ and $\mathcal{P}, \mathcal{Q} \in GL(\mathcal{X})$. A $(\mathcal{P}, \mathcal{Q})$-controlled continuous frame is a function $\Xi:\Omega\rightarrow \mathcal{X}$ satisfying the condition that for constants $0<C\leq D<\infty$, the following inequality is true for all $\xi \in \mathcal{X}$:
	\begin{equation*}
	C\|\xi\|^{2}\leq \int_{\Omega}\langle \xi,\mathcal{P}\Xi(w)\rangle \langle \mathcal{Q}\Xi(w),\xi\rangle \ d\mu(w)\leq D\|\xi\|^{2}.
	\end{equation*}
\end{definition}

\begin{theorem}\cite{CH}\label{ta} 
	Suppose $\mathcal{X}$ is a Banach space and $\mathcal{T} : \mathcal{X} \rightarrow \mathcal{X}$ is a bounded operator with $\Vert I - \mathcal{T} \Vert < 1$. Under these conditions, $\mathcal{T}$ is guaranteed to be invertible.
\end{theorem}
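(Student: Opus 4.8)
The plan is to realize $\mathcal{T}^{-1}$ explicitly via the Neumann series associated with $I-\mathcal{T}$. First I would set $S := I-\mathcal{T}\in B(\mathcal{X})$, so that by hypothesis $r := \|S\|<1$ and $\mathcal{T}=I-S$. Submultiplicativity of the operator norm gives $\|S^{n}\|\le\|S\|^{n}=r^{n}$ for every $n\ge 0$, so the series of norms $\sum_{n=0}^{\infty}\|S^{n}\|$ is dominated by the convergent geometric series $\sum_{n=0}^{\infty}r^{n}=(1-r)^{-1}<\infty$.

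Next I would invoke completeness. Since $\mathcal{X}$ is a Banach space, $B(\mathcal{X})$ is itself a Banach space, so the absolute convergence just established implies that the partial sums $R_{N}:=\sum_{n=0}^{N}S^{n}$ form a Cauchy sequence in $B(\mathcal{X})$ and hence converge in operator norm to some $R\in B(\mathcal{X})$, with $\|R\|\le(1-r)^{-1}$.

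Then I would verify that $R$ is a two-sided inverse of $\mathcal{T}$. Telescoping yields $(I-S)R_{N}=R_{N}(I-S)=I-S^{N+1}$ for every $N$. Because $\|S^{N+1}\|\le r^{N+1}\to 0$, letting $N\to\infty$ and using that left and right multiplication by the fixed bounded operator $\mathcal{T}=I-S$ are norm-continuous on $B(\mathcal{X})$ (via $\|\mathcal{T}R-\mathcal{T}R_{N}\|\le\|\mathcal{T}\|\,\|R-R_{N}\|$ and the analogous bound on the other side), I obtain $\mathcal{T}R=R\mathcal{T}=I$. Hence $\mathcal{T}$ is invertible with $\mathcal{T}^{-1}=R=\sum_{n=0}^{\infty}(I-\mathcal{T})^{n}$.

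The argument is essentially routine; the only point requiring care is the passage to the limit in the telescoped identity, which relies on the completeness of $B(\mathcal{X})$ to produce $R$ as a genuine bounded operator and on the continuity of operator composition to carry the relations $(I-S)R_{N}=R_{N}(I-S)=I-S^{N+1}$ through to $N=\infty$. I do not anticipate any deeper obstacle.
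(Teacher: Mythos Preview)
Your Neumann series argument is correct and complete; this is the standard proof of the result. Note that the paper does not actually prove this theorem---it simply quotes it from Christensen's book \cite{CH}---so there is no in-paper proof to compare against, but the approach you give is precisely the classical one found there.
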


\begin{proposition}\cite{Musazadeh} \label{p1} 
	Let $ \mathcal{S}_{1}, \mathcal{S}_{2} \in GL^{+}(\mathcal{X})$. Then $ \mathcal{S}_{2} = U \mathcal{S}_{1} \mathcal{V}^{\ast} $ if and only if $ \mathcal{V} = \mathcal{S}_{2}^{c} \mathcal{W} \mathcal{S}_{1}^{-a}$ and $ U = \mathcal{S}_{2}^{d} \mathcal{T} \mathcal{S}_{1}^{-b}$ where $ \mathcal{W} $ and $ \mathcal{T} $ are bounded operators on $\mathcal{X}$ with $ \mathcal{T}\mathcal{W}^{\ast} = I $, and $ a, b, c, d \in \mathbb{R} $ satisfying $ c + d = 1, a + b = 1 $.
\end{proposition}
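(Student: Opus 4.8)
The plan is to prove both implications by direct computation, the only genuine input being the continuous functional calculus for a positive invertible operator. First I would record the properties of real powers that the argument uses: since $\mathcal{S}_{1},\mathcal{S}_{2}\in GL^{+}(\mathcal{X})$, each has spectrum contained in a compact interval $[m,M]\subset(0,\infty)$, so for every $t\in\mathbb{R}$ the operator $\mathcal{S}_{i}^{t}$ is well defined, bounded, positive, self-adjoint (hence $(\mathcal{S}_{i}^{t})^{\ast}=\mathcal{S}_{i}^{t}$), invertible with inverse $\mathcal{S}_{i}^{-t}$, and the index law $\mathcal{S}_{i}^{s}\mathcal{S}_{i}^{t}=\mathcal{S}_{i}^{s+t}$ holds; in particular $\mathcal{S}_{i}^{s}\mathcal{S}_{i}^{t}=\mathcal{I}$ when $s+t=0$ and $\mathcal{S}_{i}^{s}\mathcal{S}_{i}^{t}=\mathcal{S}_{i}$ when $s+t=1$.

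For the forward implication, suppose $\mathcal{V}=\mathcal{S}_{2}^{c}\mathcal{W}\mathcal{S}_{1}^{-a}$ and $U=\mathcal{S}_{2}^{d}\mathcal{T}\mathcal{S}_{1}^{-b}$ with $\mathcal{T}\mathcal{W}^{\ast}=\mathcal{I}$, $a+b=1$ and $c+d=1$. Taking adjoints, $\mathcal{V}^{\ast}=\mathcal{S}_{1}^{-a}\mathcal{W}^{\ast}\mathcal{S}_{2}^{c}$, so
\[
U\mathcal{S}_{1}\mathcal{V}^{\ast}=\mathcal{S}_{2}^{d}\,\mathcal{T}\,\bigl(\mathcal{S}_{1}^{-b}\mathcal{S}_{1}\mathcal{S}_{1}^{-a}\bigr)\,\mathcal{W}^{\ast}\,\mathcal{S}_{2}^{c}=\mathcal{S}_{2}^{d}\,\mathcal{T}\mathcal{W}^{\ast}\,\mathcal{S}_{2}^{c}=\mathcal{S}_{2}^{d}\mathcal{S}_{2}^{c}=\mathcal{S}_{2},
\]
where $\mathcal{S}_{1}^{-b}\mathcal{S}_{1}\mathcal{S}_{1}^{-a}=\mathcal{S}_{1}^{1-a-b}=\mathcal{I}$ and $\mathcal{S}_{2}^{d}\mathcal{S}_{2}^{c}=\mathcal{S}_{2}^{c+d}=\mathcal{S}_{2}$.

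For the converse, suppose $\mathcal{S}_{2}=U\mathcal{S}_{1}\mathcal{V}^{\ast}$ and fix any $a,b,c,d\in\mathbb{R}$ with $a+b=1$, $c+d=1$. Set $\mathcal{W}:=\mathcal{S}_{2}^{-c}\mathcal{V}\mathcal{S}_{1}^{a}$ and $\mathcal{T}:=\mathcal{S}_{2}^{-d}U\mathcal{S}_{1}^{b}$; these are bounded, being products of bounded operators, and by construction $\mathcal{S}_{2}^{c}\mathcal{W}\mathcal{S}_{1}^{-a}=\mathcal{V}$ and $\mathcal{S}_{2}^{d}\mathcal{T}\mathcal{S}_{1}^{-b}=U$. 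Using self-adjointness of the powers, $\mathcal{W}^{\ast}=\mathcal{S}_{1}^{a}\mathcal{V}^{\ast}\mathcal{S}_{2}^{-c}$, hence
\[
\mathcal{T}\mathcal{W}^{\ast}=\mathcal{S}_{2}^{-d}\,U\,\bigl(\mathcal{S}_{1}^{b}\mathcal{S}_{1}^{a}\bigr)\,\mathcal{V}^{\ast}\,\mathcal{S}_{2}^{-c}=\mathcal{S}_{2}^{-d}\,\bigl(U\mathcal{S}_{1}\mathcal{V}^{\ast}\bigr)\,\mathcal{S}_{2}^{-c}=\mathcal{S}_{2}^{-d}\mathcal{S}_{2}\mathcal{S}_{2}^{-c}=\mathcal{S}_{2}^{1-c-d}=\mathcal{I},
\]
which is the required relation, so both directions are settled.

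I do not expect a genuine obstacle here: the whole content is the bookkeeping identity that $\mathcal{S}_{2}=U\mathcal{S}_{1}\mathcal{V}^{\ast}$ can be rewritten by pushing the powers $\mathcal{S}_{2}^{\pm c},\mathcal{S}_{2}^{\pm d}$ and $\mathcal{S}_{1}^{\pm a},\mathcal{S}_{1}^{\pm b}$ through $U$ and $\mathcal{V}$, with the constraints $a+b=1$ and $c+d=1$ exactly what is needed for the surviving powers of $\mathcal{S}_{1}$ to cancel to $\mathcal{I}$ and those of $\mathcal{S}_{2}$ to combine to $\mathcal{S}_{2}$ (or vice versa). The one point that merits care is ensuring the manipulations with real operator powers are legitimate — boundedness, self-adjointness, and the exponent law $\mathcal{S}_{i}^{s}\mathcal{S}_{i}^{t}=\mathcal{S}_{i}^{s+t}$ — all of which follow from the continuous functional calculus applied to the positive invertible operators $\mathcal{S}_{1},\mathcal{S}_{2}$.
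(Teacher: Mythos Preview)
Your argument is correct: both directions follow by the straightforward algebraic manipulation you give, and the use of the continuous functional calculus to justify the existence, self-adjointness, and exponent law for real powers of $\mathcal{S}_1,\mathcal{S}_2\in GL^+(\mathcal{X})$ is exactly the right foundation.

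Note, however, that the paper does not actually prove this proposition: it is quoted in the preliminaries with a citation to \cite{Musazadeh} and no proof is supplied. So there is no ``paper's own proof'' to compare against; your direct computation is a complete and self-contained verification of the cited result, and it is the natural one.
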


	\section{Main result}

	\begin{definition}\label{d1}
		Consider $\mathcal{X}$ as a complex Hilbert space and $(\Omega,\mu)$ as a measure space endowed with a positive measure $\mu$. The functions $\Xi, \Phi : \Omega \rightarrow \mathcal{X}$ are designated as continuous biframes with respect to $(\Omega,\mu)$ if they satisfy the following criteria:
		\begin{itemize}
			\item [(1)] Both $\Xi$ and $\Phi$ are weakly-measurable, meaning for every $\xi \in \mathcal{X}$, the mappings $\omega \rightarrow \langle \xi, \Xi_{\omega}\rangle$ and $\omega \rightarrow \langle \xi, \Phi_{\omega}\rangle$ are measurable over $\Omega$.
			\item[(2)] There exist positive constants $C$ and $D$ such that for all $\xi \in \mathcal{X}$:
			\begin{equation}\label{1}
			C \|\xi\|^2 \leq \int_{\Omega}\langle \xi,\Xi_{\omega}\rangle \langle \Phi_{\omega},\xi\rangle d\mu(\omega)\leq D\|\xi\|^2
			\end{equation}
		\end{itemize}
		
		The constants $C$ and $D$ are termed as the lower and upper biframe bounds, respectively. The biframe is considered Parseval if $C = D = 1$.
	\end{definition}

\begin{remark}
	Based on Definition \ref{d1}, we observe:
	\begin{itemize}
		\item[(i)] If $(\Xi, \Xi)$ forms a continuous biframe for $\mathcal{X}$, then $\Xi$ constitutes a continuous frame for $\mathcal{X}$.
		\item[(ii)] If $(\Xi,U\Xi)$ is a continuous biframe for some $U\in GL(\mathcal{X})$, then $\Xi$ serves as a $U$-controlled integral frame for $\mathcal{X}$.
		\item[(iii)] If $(\mathcal{T}\Xi, U\Xi)$ constitutes a continuous biframe for certain $\mathcal{T}$ and $U$ in $GL(\mathcal{X})$, then $\Xi$ is a $(\mathcal{T}, U)$-controlled integral frame for $\mathcal{X}$.
	\end{itemize}
\end{remark}

\begin{example}
	Take into account the Hilbert space $\mathbb{R}^{2}$ and the interval $\Omega = [0,1]$, which is furnished with Lebesgue's measure, thereby forming a measure space. Consider the following mappings:
	\begin{align*}
	\Xi:& [0,1] \rightarrow \mathbb{R}^{2} \\
	&\omega \mapsto (0,\omega)
	\end{align*}
	and 
	\begin{align*}
	\Phi:& [0,1] \rightarrow \mathbb{R}^{2} \\
	&\omega \mapsto (\omega,1)
	\end{align*}
	From these definitions, it follows that for any point $(x, y)$ in $\mathbb{R}^{2}$:
	\begin{align*}
	\frac{x^{2} + y^{2}}{3} \leq \int_{\Omega} \langle (x,y), (0,\omega)\rangle \langle (\omega,1), (x,y)\rangle d\mu(\omega) \leq \frac{x^{2} + y^{2}}{2}
	\end{align*}
	Consequently, the pair $(\Xi, \Phi)$ constitutes a continuous biframe for $\mathbb{R}^{2}$ with lower and upper bounds of $\frac{1}{3}$ and $\frac{1}{2}$, respectively.
\end{example}

	We establish the concept of a biframe operator associated with a biframe and define biframes through the examination of this operator's characteristics.

\begin{definition}
	Given a continuous biframe $(\Xi, \Phi)$ with respect to $(\Omega,\mu)$, we define the biframe operator $\mathcal{T}_{\Xi, \Phi}$ as follows:
	\begin{equation*} 
	\mathcal{T}_{\Xi, \Phi}:\mathcal{X} \rightarrow \mathcal{X}
	\end{equation*}
	\begin{equation}  
	\mathcal{T}_{\Xi, \Phi}(\xi)=\int_{\Omega}\langle \xi,\Xi_{\omega}\rangle  \Phi_{\omega} d\mu (\omega)\;\;\forall \xi\in \mathcal{X}
	\end{equation}
\end{definition}

	We introduce several characteristics of the continuous biframe operator.

\begin{theorem}\label{t4}
	Consider a continuous biframe $(\Xi, \Phi)$ within the Hilbert space $\mathcal{X}$, characterized by the bounds $C$ and $D$. Under these conditions, the subsequent assertions hold:
	\begin{itemize}
		\item [(i)] The biframe operator $\mathcal{T}_{\Xi, \Phi}$ exhibits the qualities of being bounded, positive, and invertible.
		\item [(ii)] The pair $(\Xi, \Phi)$ constitutes a continuous biframe precisely when $(\Phi, \Xi)$ similarly forms a continuous biframe.
	\end{itemize}
\end{theorem}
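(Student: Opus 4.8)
The plan is to route everything through the sesquilinear form
\[
B(\xi,\eta):=\int_{\Omega}\langle \xi,\Xi_{\omega}\rangle\langle \Phi_{\omega},\eta\rangle\, d\mu(\omega),
\]
the off-diagonal version of the quantity in \eqref{1}. The first task is to show $B$ is well defined on all of $\mathcal{X}\times\mathcal{X}$ and bounded. By \eqref{1} the diagonal values $B(\xi,\xi)$ are finite and lie in $[C\|\xi\|^{2},D\|\xi\|^{2}]\subseteq[0,\infty)$. Applying the polarization identity
\[
\langle \xi,\Xi_{\omega}\rangle\langle \Phi_{\omega},\eta\rangle=\tfrac14\sum_{k=0}^{3} i^{k}\,\langle \xi+i^{k}\eta,\Xi_{\omega}\rangle\langle \Phi_{\omega},\xi+i^{k}\eta\rangle
\]
under the integral sign exhibits $B(\xi,\eta)$ as a finite combination of finite integrals, hence finite, and manifestly sesquilinear. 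Expanding $B(\xi\pm\eta,\xi\pm\eta)$ and $B(\xi\pm i\eta,\xi\pm i\eta)$ and using that each diagonal value is real forces $B(\eta,\xi)=\overline{B(\xi,\eta)}$, so $B$ is a non-negative Hermitian form. Cauchy--Schwarz for the associated semi-inner product then gives $|B(\xi,\eta)|\le\sqrt{B(\xi,\xi)}\sqrt{B(\eta,\eta)}\le D\|\xi\|\,\|\eta\|$, so by the Riesz representation theorem there is a unique bounded operator $\mathcal{T}_{\Xi,\Phi}$ with $\langle \mathcal{T}_{\Xi,\Phi}\xi,\eta\rangle=B(\xi,\eta)$ and $\|\mathcal{T}_{\Xi,\Phi}\|\le D$; this operator coincides with the weak integral in the definition.

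For the remainder of (i): positivity is immediate, since $\langle \mathcal{T}_{\Xi,\Phi}\xi,\xi\rangle=B(\xi,\xi)\ge C\|\xi\|^{2}\ge 0$, and over a complex Hilbert space a positive operator is automatically self-adjoint (this also follows from the Hermitian symmetry of $B$). For invertibility I would use the lower estimate: $C\|\xi\|^{2}\le\langle \mathcal{T}_{\Xi,\Phi}\xi,\xi\rangle\le\|\mathcal{T}_{\Xi,\Phi}\xi\|\,\|\xi\|$ yields $\|\mathcal{T}_{\Xi,\Phi}\xi\|\ge C\|\xi\|$, so $\mathcal{T}_{\Xi,\Phi}$ is injective with closed range; being self-adjoint, $\overline{\operatorname{ran}\mathcal{T}_{\Xi,\Phi}}=(\ker\mathcal{T}_{\Xi,\Phi})^{\perp}=\mathcal{X}$, and a closed dense subspace is all of $\mathcal{X}$, so $\mathcal{T}_{\Xi,\Phi}$ is bijective with $\|\mathcal{T}_{\Xi,\Phi}^{-1}\|\le 1/C$. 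Alternatively, $C\mathcal{I}\le\mathcal{T}_{\Xi,\Phi}\le D\mathcal{I}$ gives $\bigl\|\mathcal{I}-\tfrac{2}{C+D}\mathcal{T}_{\Xi,\Phi}\bigr\|\le\tfrac{D-C}{D+C}<1$, whence Theorem \ref{ta} applies.

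Part (ii) is then a short bookkeeping observation. Weak measurability is symmetric in $\Xi$ and $\Phi$, so only the two-sided estimate needs checking. For fixed $\xi$, the integrand for $(\Phi,\Xi)$ is the complex conjugate of the one for $(\Xi,\Phi)$, namely $\langle\xi,\Phi_{\omega}\rangle\langle\Xi_{\omega},\xi\rangle=\overline{\langle\xi,\Xi_{\omega}\rangle\langle\Phi_{\omega},\xi\rangle}$, so
\[
\int_{\Omega}\langle\xi,\Phi_{\omega}\rangle\langle\Xi_{\omega},\xi\rangle\,d\mu(\omega)=\overline{\int_{\Omega}\langle\xi,\Xi_{\omega}\rangle\langle\Phi_{\omega},\xi\rangle\,d\mu(\omega)}.
\]
Since the right-hand integral is real (it equals $\langle\mathcal{T}_{\Xi,\Phi}\xi,\xi\rangle$, sandwiched between $C\|\xi\|^{2}$ and $D\|\xi\|^{2}$), the two integrals are equal, so $(\Phi,\Xi)$ satisfies \eqref{1} with the same bounds $C,D$; the converse holds by symmetry.

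I expect the only genuine obstacle to be the well-definedness and boundedness in the first paragraph: the definition controls the quadratic form only on the diagonal, so one must argue that this already forces $B$ to extend to a bounded sesquilinear form. The combination of Hermitian symmetry with Cauchy--Schwarz for semi-inner products is the clean route, and it deserves to be written out rather than taken for granted; once $\mathcal{T}_{\Xi,\Phi}$ is in hand as a bounded operator, positivity, self-adjointness, invertibility, and the symmetry in (ii) are all routine.
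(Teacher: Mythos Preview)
Your argument is correct and, for positivity, invertibility, and part~(ii), tracks the paper's proof closely: the paper also reads off $C\mathcal{I}\le\mathcal{T}_{\Xi,\Phi}\le D\mathcal{I}$ from \eqref{1}, invokes the Neumann-series criterion (Theorem~\ref{ta}) with the rescaling $D^{-1}\mathcal{T}_{\Xi,\Phi}$ rather than your $\tfrac{2}{C+D}\mathcal{T}_{\Xi,\Phi}$, and for (ii) uses the same conjugation identity to equate the two integrals. The genuine difference is in how boundedness and well-definedness of $\mathcal{T}_{\Xi,\Phi}$ are obtained. The paper takes the operator as given by its defining weak integral and infers boundedness directly from the operator inequality $C\mathcal{I}\le\mathcal{T}_{\Xi,\Phi}\le D\mathcal{I}$; you instead build the operator from scratch via the sesquilinear form $B$, using polarization to pass from diagonal to off-diagonal values and then Cauchy--Schwarz for the induced semi-inner product to get $|B(\xi,\eta)|\le D\|\xi\|\,\|\eta\|$ before invoking Riesz. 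Your route is more self-contained---it does not presuppose that the weak integral converges for every $\xi$---and it makes explicit why the diagonal control in \eqref{1} suffices, which the paper leaves implicit. The cost is a few extra lines; the payoff is that the existence of $\mathcal{T}_{\Xi,\Phi}$ as a bounded operator is actually proved rather than assumed.
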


\begin{proof}
	(i) Consider the expression:
	\begin{align*}
	\langle \mathcal{T}_{\Xi, \Phi}\xi, \xi \rangle & = \langle \int_{\Omega} \langle \xi, \Xi_{\omega} \rangle \Phi_{\omega} d\mu(\omega), \xi \rangle \\
	& = \int_{\Omega} \langle \xi, \Xi_{\omega} \rangle \langle \Phi_{\omega}, \xi \rangle d\mu(\omega)
	\end{align*}
	Based on \ref{1}, we have 
	\begin{equation*}
	0 \leq C\|\xi\|^2 \leq \langle \mathcal{T}_{\Xi, \Phi}\xi, \xi \rangle \;\; \forall \xi \in \mathcal{X}.
	\end{equation*}
	Hence, $\mathcal{T}_{\Xi, \Phi}$ is a positive and self-adjoint operator.
	
	Again, from \ref{1}, we derive 
	\begin{equation*}
	C\|\xi\|^2 \leq \langle \mathcal{T}_{\Xi, \Phi}\xi, \xi \rangle \leq D\|\xi\|^2 \;\; \forall \xi \in \mathcal{X}.
	\end{equation*}
	Thus,
	\begin{equation*}
	C.I \leq \mathcal{T}_{\Xi, \Phi} \leq D.I
	\end{equation*}
	indicating that $\mathcal{T}_{\Xi, \Phi}$ is a bounded operator.
	
	Moreover,
	\begin{equation*}
	0 \leq I - D^{-1}\mathcal{T}_{\Xi, \Phi} \leq \frac{D-C}{D}.I
	\end{equation*}
	As a result,
	\begin{equation*}
	\| I - D^{-1}\mathcal{T}_{\Xi, \Phi} \| = \sup_{\xi \in \mathcal{X}, \|\xi\| = 1} \| \langle I - D^{-1}\mathcal{T}_{\Xi, \Phi}\xi, \xi \rangle \| \leq \frac{D-C}{D} < 1
	\end{equation*}
	Applying Theorem \ref{ta}, we conclude that $\mathcal{T}_{\Xi, \Phi}$ is invertible.
	
	(ii) Assume $(\Xi, \Phi)$ is a continuous biframe with bounds $C$ and $D$. For every $\xi \in \mathcal{X}$,
	\begin{equation*}
	C \|\xi\|^2 \leq \int_{\Omega}\langle \xi, \Xi_{\omega}\rangle \langle \Phi_{\omega}, \xi\rangle d\mu(\omega) \leq D\|\xi\|^2,
	\end{equation*}
	We can express this as
	\begin{equation*}
	\int_{\Omega}\langle \xi, \Xi_{\omega}\rangle \langle \Phi_{\omega}, \xi\rangle d\mu(\omega) = \int_{\Omega}\langle \xi, \Phi_{\omega}\rangle \langle \Xi_{\omega}, \xi\rangle d\mu(\omega),
	\end{equation*}
	which implies that
	\begin{equation*}
	C \|\xi\|^2 \leq \int_{\Omega}\langle \xi, \Phi_{\omega}\rangle \langle \Xi_{\omega}, \xi\rangle d\mu(\omega) \leq D\|\xi\|^2,
	\end{equation*}
	Therefore, $(\Phi, \Xi)$ is also a continuous biframe with bounds $C$ and $D$. The converse can be demonstrated similarly.
\end{proof}

\begin{example}
	Let $\mathcal{X} = \{ M = \begin{pmatrix}
	a & 0  \\ 
	0 & b
	\end{pmatrix}; a,b \in \mathbb{C} \}$, $\Omega =[0,1]$, and 
	
	define $\Xi:[0,1]\rightarrow \mathcal{X}$ by $\Xi(\omega) = \begin{pmatrix}
	\omega & 0  \\ 
	0 & \frac{\omega}{2}
	\end{pmatrix}$
	
	and
	
	define $\Phi:[0,1]\rightarrow \mathcal{X}$ by $\Phi(\omega) = \begin{pmatrix}
	2\omega & 0  \\ 
	0 & \omega
	\end{pmatrix}$
	
	For $M \in \mathcal{X}$, we have
	
	$\int_{\Omega} \langle M, \Xi(\omega)\rangle \langle \Phi(\omega), M \rangle d\mu(\omega) = \frac{1}{3} \begin{pmatrix}
	2a^{2} & 0  \\ 
	0 & \frac{b^{2}}{2}
	\end{pmatrix}$
	
	This demonstrates that $(\Xi, \Phi)$ forms a continuous biframe for $\mathcal{X}$ with bounds $\frac{1}{2}$ and $2$.
\end{example}

\begin{corollary}
	Every continuous biframe $(\Xi, \Phi)$ forms a pair frame.
\end{corollary}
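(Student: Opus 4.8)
The plan is to read the corollary as a direct consequence of part~(i) of Theorem~\ref{t4}. Recall that a pair $(\Xi,\Phi)$ of weakly measurable maps $\Omega\to\mathcal{X}$ is called a pair frame precisely when the operator $\xi\mapsto\int_{\Omega}\langle\xi,\Xi_{\omega}\rangle\Phi_{\omega}\,d\mu(\omega)$, i.e.\ the biframe operator $\mathcal{T}_{\Xi,\Phi}$, is well defined on all of $\mathcal{X}$, bounded, and invertible. So the work reduces to exhibiting exactly these three features of $\mathcal{T}_{\Xi,\Phi}$ from the biframe inequalities \eqref{1}.

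First I would record that weak measurability of $\Xi$ and $\Phi$ is part of Definition~\ref{d1}; hence for fixed $\xi,\eta\in\mathcal{X}$ the map $\omega\mapsto\langle\xi,\Xi_{\omega}\rangle\langle\Phi_{\omega},\eta\rangle$ is measurable, and polarizing the upper estimate in \eqref{1} bounds its integral by a constant multiple of $\|\xi\|\,\|\eta\|$. By the Riesz representation theorem this defines a bounded operator on $\mathcal{X}$ that agrees weakly, hence strongly, with $\mathcal{T}_{\Xi,\Phi}$; so $\mathcal{T}_{\Xi,\Phi}$ is well defined and bounded. Next I would simply invoke Theorem~\ref{t4}(i), which already asserts that $\mathcal{T}_{\Xi,\Phi}$ is bounded, positive, and invertible. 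Boundedness and invertibility are precisely the defining requirements of a pair frame, so $(\Xi,\Phi)$ is a pair frame.

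There is essentially no obstacle to overcome here: the substantive analytic step --- obtaining invertibility through the estimate $\|I-D^{-1}\mathcal{T}_{\Xi,\Phi}\|\le (D-C)/D<1$ together with Theorem~\ref{ta} --- has already been carried out in the proof of Theorem~\ref{t4}. The only point deserving a line of care is the well-definedness of the integral operator, which I would dispatch via the weak formulation above. Thus the corollary is effectively a restatement of Theorem~\ref{t4}(i), and the proof amounts to a one-line appeal to it once the definition of a pair frame has been recalled.
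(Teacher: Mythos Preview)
Your proposal is correct and follows essentially the same approach as the paper: both simply invoke Theorem~\ref{t4}(i) to conclude that $\mathcal{T}_{\Xi,\Phi}$ is well defined and invertible, which is exactly the definition of a pair frame. Your added remark on well-definedness via polarization and the Riesz representation theorem is a harmless elaboration of what the paper leaves implicit.
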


\begin{proof}
	By Theorem \ref{t4}, the operator $\mathcal{T}_{\Xi, \Phi}$ is well-defined and invertible. Hence, according to the definition of pair frames, $(\Xi, \Phi)$ constitutes a pair frame.
\end{proof}

Among the fundamental results in frame theory is the reconstruction of elements in a Hilbert space from frame coefficients. The following theorem provides this reconstruction using biframes.

\begin{theorem}
	Let $(\Xi, \Phi)$ be a continuous biframe for $\mathcal{X}$ with the biframe operator $\mathcal{T}_{\Xi, \Phi}$. Then, for every $\xi \in \mathcal{X}$, the following reconstruction formula is valid:
	\begin{equation}\label{3}
	\xi = \int_{\Omega} \langle \xi, \mathcal{T}^{-1}_{\Phi, \Xi} \Xi_{\omega} \rangle \Phi_{\omega} d\mu(\omega) = \int_{\Omega} \langle \xi, \Xi_{\omega} \rangle \mathcal{T}^{-1}_{\Xi, \Phi} \Phi_{\omega} d\mu(\omega)
	\end{equation}
\end{theorem}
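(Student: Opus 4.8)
The plan is to derive both identities from the invertibility of the biframe operator (Theorem \ref{t4}(i)) together with the elementary relation between $\mathcal{T}_{\Xi,\Phi}$ and $\mathcal{T}_{\Phi,\Xi}$. First I would record, for all $\xi,\eta\in\mathcal{X}$,
\[
\langle \mathcal{T}_{\Xi,\Phi}\xi,\eta\rangle=\int_{\Omega}\langle\xi,\Xi_{\omega}\rangle\langle\Phi_{\omega},\eta\rangle\,d\mu(\omega)=\overline{\langle\mathcal{T}_{\Phi,\Xi}\eta,\xi\rangle}=\langle\xi,\mathcal{T}_{\Phi,\Xi}\eta\rangle,
\]
so that $\mathcal{T}_{\Phi,\Xi}=\mathcal{T}_{\Xi,\Phi}^{\ast}$. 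By Theorem \ref{t4}(ii) the pair $(\Phi,\Xi)$ is again a continuous biframe, hence $\mathcal{T}_{\Phi,\Xi}$ is invertible by Theorem \ref{t4}(i); taking adjoints, $\mathcal{T}_{\Xi,\Phi}$ is invertible as well, with $(\mathcal{T}_{\Xi,\Phi}^{-1})^{\ast}=\mathcal{T}_{\Phi,\Xi}^{-1}$.

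For the first equality, I would write $\xi=\mathcal{T}_{\Xi,\Phi}\big(\mathcal{T}_{\Xi,\Phi}^{-1}\xi\big)$ and expand the outer operator through its defining weak integral, obtaining $\xi=\int_{\Omega}\langle\mathcal{T}_{\Xi,\Phi}^{-1}\xi,\Xi_{\omega}\rangle\Phi_{\omega}\,d\mu(\omega)$. Then $\langle\mathcal{T}_{\Xi,\Phi}^{-1}\xi,\Xi_{\omega}\rangle=\langle\xi,(\mathcal{T}_{\Xi,\Phi}^{-1})^{\ast}\Xi_{\omega}\rangle=\langle\xi,\mathcal{T}_{\Phi,\Xi}^{-1}\Xi_{\omega}\rangle$, which is precisely the left-hand member of \eqref{3}.

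For the second equality, I would instead factor $\xi=\mathcal{T}_{\Xi,\Phi}^{-1}\big(\mathcal{T}_{\Xi,\Phi}\xi\big)=\mathcal{T}_{\Xi,\Phi}^{-1}\int_{\Omega}\langle\xi,\Xi_{\omega}\rangle\Phi_{\omega}\,d\mu(\omega)$ and commute the bounded operator $\mathcal{T}_{\Xi,\Phi}^{-1}$ with the integral. Testing against an arbitrary $\eta\in\mathcal{X}$ and using the adjoint gives $\int_{\Omega}\langle\xi,\Xi_{\omega}\rangle\langle\Phi_{\omega},(\mathcal{T}_{\Xi,\Phi}^{-1})^{\ast}\eta\rangle\,d\mu(\omega)=\int_{\Omega}\langle\xi,\Xi_{\omega}\rangle\langle\mathcal{T}_{\Xi,\Phi}^{-1}\Phi_{\omega},\eta\rangle\,d\mu(\omega)$, and since $\eta$ is arbitrary the vector so identified is $\int_{\Omega}\langle\xi,\Xi_{\omega}\rangle\mathcal{T}_{\Xi,\Phi}^{-1}\Phi_{\omega}\,d\mu(\omega)$, the right-hand member of \eqref{3}.

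The only delicate point is the legitimacy of interchanging a bounded operator with the weak integral defining $\mathcal{T}_{\Xi,\Phi}$; this is not a new convergence issue, since $\omega\mapsto\langle\xi,\Xi_{\omega}\rangle\langle\Phi_{\omega},\cdot\,\rangle$ is exactly the bounded conjugate-linear functional whose Riesz representative is $\mathcal{T}_{\Xi,\Phi}\xi$, so composing it with a bounded operator again produces a bounded functional and the identification above is valid. Everything else is bookkeeping with adjoints and the symmetry supplied by Theorem \ref{t4}.
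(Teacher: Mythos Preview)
Your argument is correct and follows essentially the same route as the paper: both proofs factor $\xi=\mathcal{T}_{\Xi,\Phi}\mathcal{T}_{\Xi,\Phi}^{-1}\xi$ and $\xi=\mathcal{T}_{\Xi,\Phi}^{-1}\mathcal{T}_{\Xi,\Phi}\xi$, push the bounded operator through the defining integral, and move the inverse across the inner product via the adjoint. The only cosmetic difference is that you establish $\mathcal{T}_{\Phi,\Xi}=\mathcal{T}_{\Xi,\Phi}^{\ast}$ by direct computation, whereas the paper invokes positivity (hence self-adjointness) of $\mathcal{T}_{\Xi,\Phi}$ from Theorem~\ref{t4} to reach the same identification; your added remarks on the legitimacy of the weak-integral interchange are more careful than what the paper records, but not a departure in method.
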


\begin{proof}
	For any element $\xi \in \mathcal{X}$, it holds that:
	\begin{equation*}
	\xi = \mathcal{T}^{-1}_{\Xi, \Phi} \mathcal{T}_{\Xi, \Phi}\xi = \mathcal{T}^{-1}_{\Xi, \Phi}\int_{\Omega}\langle \xi, \Xi_{\omega} \rangle  \Phi_{\omega} d\mu (\omega) = \int_{\Omega}\langle \xi, \Xi_{\omega} \rangle \mathcal{T}^{-1}_{\Xi, \Phi}\Phi_{\omega} d\mu (\omega)
	\end{equation*}
	and
	\begin{equation*}
	\xi = \mathcal{T}_{\Xi, \Phi} \mathcal{T}^{-1}_{\Xi, \Phi}\xi = \int_{\Omega}\langle \mathcal{T}^{-1}_{\Xi, \Phi} \xi, \Xi_{\omega} \rangle  \Phi_{\omega} d\mu (\omega) = \int_{\Omega}\langle \xi, \mathcal{T}^{-1}_{\Phi, \Xi}\Xi_{\omega} \rangle \Phi_{\omega} d\mu (\omega)
	\end{equation*}
	In a complex Hilbert space, every positive operator is self-adjoint. Therefore, the positive operator $\mathcal{T}_{\Xi, \Phi}$ possesses the self-adjointness property in complex Hilbert spaces.
\end{proof}

\begin{example}
	In the context of the space $\mathcal{X} = \mathbb{R}^{2}$, consider the vector $M = \begin{pmatrix} x \\ y \end{pmatrix}$ within $\mathbb{R}^{2}$. Evaluating the inner product gives us:
	
	$\langle \begin{pmatrix} 2 & 1 \\ 3 & 4 \end{pmatrix} M, M \rangle = \langle \begin{pmatrix} 2x + y \\ 3x + 4y \end{pmatrix}, M \rangle = 5x^{2} + 4y^{2} + xy > 0$.
	
	This calculation demonstrates that the matrix $N = \begin{pmatrix} 2 & 1 \\ 3 & 4 \end{pmatrix}$ is positive. However, it is evident that $N$ does not possess the property of self-adjointness.
\end{example}

The reconstruction formula in continuous biframe theory, as delineated in \ref{3}, designates the sequence $\{\langle \xi, \mathcal{T}^{-1}_{\Xi, \Phi}\Xi_{\omega} \rangle\}$ as the continuous biframe coefficients sequence.

Our objective is to characterize continuous biframes by examining the attributes of the continuous biframe operator.

\begin{theorem} \label{t8}
	Suppose $\{\Xi_{\omega}\}_{\omega\in\Omega}$ and $\{\Phi_{\omega}\}_{\omega\in\Omega}$ are sequences within a complex Hilbert space $\mathcal{X}$. Then, the set $\{\{\Xi_{\omega}\}_{\omega\in\Omega}, \{\Phi_{\omega}\}_{\omega\in\Omega}\}$ forms a continuous biframe for $\mathcal{X}$ if and only if the operator $\mathcal{T}_{\Xi, \Phi}$ is both positive and bounded.
\end{theorem}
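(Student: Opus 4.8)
The plan is to split the equivalence into its two implications and to route everything through the scalar identity
\[
\langle \mathcal{T}_{\Xi,\Phi}\xi,\xi\rangle=\int_{\Omega}\langle \xi,\Xi_{\omega}\rangle\langle\Phi_{\omega},\xi\rangle\, d\mu(\omega),\qquad \xi\in\mathcal{X},
\]
obtained by pulling the inner product with $\xi$ inside the weakly convergent vector-valued integral defining $\mathcal{T}_{\Xi,\Phi}$; this identity is the bridge between the operator inequalities $C\,\mathcal{I}\le \mathcal{T}_{\Xi,\Phi}\le D\,\mathcal{I}$ and the biframe inequalities \eqref{1}. For the forward implication, I would assume $\{\{\Xi_{\omega}\}_{\omega\in\Omega},\{\Phi_{\omega}\}_{\omega\in\Omega}\}$ is a continuous biframe: then weak measurability of $\Xi,\Phi$ and the upper bound in \eqref{1} make $\mathcal{T}_{\Xi,\Phi}$ a well-defined operator, and Theorem \ref{t4}(i) already gives that $\mathcal{T}_{\Xi,\Phi}$ is bounded and positive (indeed invertible), so this direction is essentially a restatement of what has been proved.

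For the converse, suppose $\mathcal{T}_{\Xi,\Phi}$ is bounded and positive; the very fact that $\mathcal{T}_{\Xi,\Phi}$ is a genuine operator forces $\Xi$ and $\Phi$ to be weakly measurable, so condition (1) of Definition \ref{d1} holds automatically. Boundedness together with the identity above yields
\[
\int_{\Omega}\langle \xi,\Xi_{\omega}\rangle\langle\Phi_{\omega},\xi\rangle\, d\mu(\omega)=\langle \mathcal{T}_{\Xi,\Phi}\xi,\xi\rangle\le \|\mathcal{T}_{\Xi,\Phi}\|\,\|\xi\|^{2},
\]
so the upper biframe bound can be taken to be $D=\|\mathcal{T}_{\Xi,\Phi}\|$. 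For the lower bound I would read ``positive'' in the sense of $GL^{+}(\mathcal{X})$, i.e. positive and invertible; then $\mathcal{T}_{\Xi,\Phi}^{-1}\in GL^{+}(\mathcal{X})$ as well, and the standard estimate $\langle \mathcal{T}_{\Xi,\Phi}\xi,\xi\rangle\ge \|\mathcal{T}_{\Xi,\Phi}^{-1}\|^{-1}\|\xi\|^{2}$ — a consequence of the continuous functional calculus for the bounded self-adjoint operator $\mathcal{T}_{\Xi,\Phi}$ — supplies the lower biframe bound $C=\|\mathcal{T}_{\Xi,\Phi}^{-1}\|^{-1}>0$. Combining the two estimates through the scalar identity gives \eqref{1}, so the pair is a continuous biframe.

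The main obstacle is precisely this lower bound in the converse: bare positivity $\langle \mathcal{T}_{\Xi,\Phi}\xi,\xi\rangle\ge 0$ does not by itself produce a positive constant $C$ (the zero map is positive and bounded), so one must either build invertibility into the meaning of ``positive'' here, in line with the notation $GL^{+}(\mathcal{X})$ fixed in the preliminaries, or add invertibility as an explicit hypothesis; I would flag this at the outset of the converse argument. The remaining point — justifying the interchange of the inner product and the integral in the displayed identity — is routine for weakly $\mu$-integrable vector-valued maps (and is already used implicitly in the proof of Theorem \ref{t4}), so I would cite it rather than reprove it.
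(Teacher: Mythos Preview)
Your proof is correct and follows the same overall plan as the paper: the forward direction is dispatched by citing Theorem~\ref{t4}, and the converse upper bound is obtained identically via $\langle \mathcal{T}_{\Xi,\Phi}\xi,\xi\rangle\le\|\mathcal{T}_{\Xi,\Phi}\|\,\|\xi\|^{2}$. The only genuine difference is in how the lower biframe bound is extracted. The paper uses the elementary positive-operator inequality $\|\mathcal{T}_{\Xi,\Phi}\xi\|^{2}\le\|\mathcal{T}_{\Xi,\Phi}\|\,\langle\mathcal{T}_{\Xi,\Phi}\xi,\xi\rangle$ together with an asserted lower bound $a\|\xi\|\le\|\mathcal{T}_{\Xi,\Phi}\xi\|$, arriving at $C=a^{2}/\|\mathcal{T}_{\Xi,\Phi}\|$; you instead invoke the spectral estimate $\langle\mathcal{T}_{\Xi,\Phi}\xi,\xi\rangle\ge\|\mathcal{T}_{\Xi,\Phi}^{-1}\|^{-1}\|\xi\|^{2}$ directly, which is cleaner and yields the sharp constant. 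You are also right to flag that bare positivity is insufficient for the lower bound: the paper's proof silently assumes $\mathcal{T}_{\Xi,\Phi}$ is bounded below (equivalently invertible, since it is self-adjoint) when it writes ``$\exists a>0,\ a\|\xi\|\le\|\mathcal{T}_{\Xi,\Phi}\xi\|$'', so your reading of ``positive'' as membership in $GL^{+}(\mathcal{X})$ is exactly the hypothesis both arguments actually need.
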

	
	\begin{proof}
		Assume that the pair $\left(\{\Xi_{\omega}\}_{\omega\in\Omega}, \{\Phi_{\omega}\}_{\omega\in\Omega}\right)$ constitutes a continuous biframe for $\mathcal{X}$. By Theorem \ref{t4}, $\mathcal{T}_{\Xi, \Phi}$ is a positive and invertible operator, and thus, it is also bounded.
		
		Now, suppose $\mathcal{T}_{\Xi, \Phi}$ is a positive and bounded operator on $\mathcal{X}$. Given the positivity of $\mathcal{T}_{\Xi, \Phi}$, it follows that this operator is self-adjoint. For any $\xi \in \mathcal{X}$:
		\begin{equation*}
		\int_{\Omega} \langle \xi, \Xi_{\omega} \rangle \langle \Phi_{\omega}, \xi \rangle d\mu(\omega) = \langle \mathcal{T}_{\Xi, \Phi}\xi, \xi \rangle = | \langle \mathcal{T}_{\Xi, \Phi}\xi, \xi \rangle | \leq \| \mathcal{T}_{\Xi, \Phi} \| \| \xi \|^2
		\end{equation*}
		From the positivity of $\mathcal{T}_{\Xi, \Phi}$, we derive
		\begin{equation}\label{4}
		\| \mathcal{T}_{\Xi, \Phi} \xi \|^2 \leq \| \mathcal{T}_{\Xi, \Phi} \| \langle \mathcal{T}_{\Xi, \Phi}, \xi \rangle
		\end{equation}
		$\mathcal{T}_{\Xi, \Phi}$ is bounded with a lower bound $a$:
		\begin{equation}\label{5}
		\exists a > 0, \; a\| \xi \| \leq \| \mathcal{T}_{\Xi, \Phi} \xi \|
		\end{equation}
		Utilizing \ref{4} and \ref{5}, we conclude
		\begin{equation*}
		a^2 \| \xi \|^2 \leq \| \mathcal{T}_{\Xi, \Phi} \| \langle \mathcal{T}_{\Xi, \Phi}, \xi \rangle, \; \forall \xi \in \mathcal{X} \Rightarrow \langle \mathcal{T}_{\Xi, \Phi}, \xi \rangle \geq \frac{a^2}{\| \mathcal{T}_{\Xi, \Phi} \|} \| \xi \|^2
		\end{equation*}
		Thus,
		\begin{equation*}
		\frac{a^2}{\| \mathcal{T}_{\Xi, \Phi} \|} \| \xi \|^2 \leq \int_{\Omega} \langle \xi, \Xi_{\omega} \rangle \langle \Phi_{\omega}, \xi \rangle d\mu(\omega) \leq \| \mathcal{T}_{\Xi, \Phi} \| \| \xi \|^2
		\end{equation*}
		
		Therefore, the pair $\left(\{\Xi_{\omega}\}_{\omega\in\Omega}, \{\Phi_{\omega}\}_{\omega\in\Omega}\right)$ forms a continuous biframe for $\mathcal{X}$.
	\end{proof}

\begin{theorem}\label{t3.10}
	Assume that the pair $(\{\Xi_{\omega}\}_{\omega\in\Omega}, \{\Phi_{\omega}\}_{\omega\in\Omega})$ forms a continuous biframe for the Hilbert space $\mathcal{X}$.  $\{\Xi_{\omega}\}_{\omega \in \Omega }$ constitutes a Riesz basis for $\mathcal{X}$ if and only if  $\{\Phi_{\omega}\}_{\omega \in \Omega}$ also constitutes a Riesz basis for $\mathcal{X}$.
\end{theorem}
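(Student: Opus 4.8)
Here is how I would attack Theorem~\ref{t3.10}.

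\medskip

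\noindent\emph{Reduction.} By Theorem~\ref{t4}(ii) the pair $(\{\Phi_\omega\}_{\omega\in\Omega},\{\Xi_\omega\}_{\omega\in\Omega})$ is again a continuous biframe, so the two components play symmetric roles and it suffices to prove one implication: if $\{\Xi_\omega\}_{\omega\in\Omega}$ is a Riesz basis for $\mathcal{X}$, then so is $\{\Phi_\omega\}_{\omega\in\Omega}$. By the definition of a Riesz basis, write $\Xi_\omega = U e_\omega$ with $\{e_\omega\}_{\omega\in\Omega}$ an orthonormal basis for $\mathcal{X}$ and $U\in GL(\mathcal{X})$.

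\medskip

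\noindent\emph{Transferring the biframe to an orthonormal basis.} Substituting $\xi=(U^{\ast})^{-1}\eta$ into \eqref{1} and using $\langle (U^{\ast})^{-1}\eta, Ue_\omega\rangle=\langle \eta,e_\omega\rangle$ together with $\langle \Phi_\omega,(U^{\ast})^{-1}\eta\rangle=\langle U^{-1}\Phi_\omega,\eta\rangle$, one obtains that $(\{e_\omega\}_{\omega\in\Omega},\{\Psi_\omega\}_{\omega\in\Omega})$ is a continuous biframe for $\mathcal{X}$, where $\Psi_\omega:=U^{-1}\Phi_\omega$; the bounds only get multiplied by constants depending on $\|U\|$ and $\|U^{-1}\|$, hence stay strictly positive and finite. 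Since $U,U^{-1}$ are bounded and invertible, $\{\Phi_\omega\}_{\omega\in\Omega}$ is a Riesz basis if and only if $\{\Psi_\omega\}_{\omega\in\Omega}$ is, so the problem reduces to the case in which the first component of the biframe is an orthonormal basis.

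\medskip

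\noindent\emph{The key step.} Consider the biframe operator $\mathcal{T}_{e,\Psi}\eta=\int_\Omega \langle \eta,e_\omega\rangle\Psi_\omega\,d\mu(\omega)$; by Theorem~\ref{t4}(i) it is bounded, positive and invertible, i.e. $\mathcal{T}_{e,\Psi}\in GL^{+}(\mathcal{X})$. Since $\{e_\omega\}_{\omega\in\Omega}$ is an orthonormal basis, its analysis operator $T_e\colon\mathcal{X}\to L^2(\Omega,\mu)$, $T_e\eta=\bigl(\omega\mapsto\langle\eta,e_\omega\rangle\bigr)$, is a unitary onto $L^2(\Omega,\mu)$ and $\mathcal{T}_{e,\Psi}=D_\Psi T_e$, where $D_\Psi f=\int_\Omega f(\omega)\Psi_\omega\,d\mu(\omega)$ is the synthesis operator. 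Hence $D_\Psi=\mathcal{T}_{e,\Psi}T_e^{-1}$ is a composition of bounded invertible operators, so it is itself bounded and invertible; equivalently, $\mathcal{T}_{e,\Psi}e_\omega=\Psi_\omega$ for a.e.\ $\omega$. Either way, $\{\Psi_\omega\}_{\omega\in\Omega}$ is the image of the orthonormal basis $\{e_\omega\}_{\omega\in\Omega}$ under $\mathcal{T}_{e,\Psi}\in GL(\mathcal{X})$, hence a Riesz basis for $\mathcal{X}$. Undoing the reduction, $\Phi_\omega=U\Psi_\omega$ is the image of a Riesz basis under $U\in GL(\mathcal{X})$, so $\{\Phi_\omega\}_{\omega\in\Omega}$ is a Riesz basis, and the converse follows from the symmetry noted above.

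\medskip

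\noindent\emph{Main obstacle.} The delicate point is reconciling the ``continuous'' formalism with the notion of a Riesz basis: for $\{e_\omega\}_{\omega\in\Omega}$ to be an orthonormal basis indexed by $(\Omega,\mu)$, its analysis operator must be a unitary onto $L^2(\Omega,\mu)$, which essentially forces $\mu$ to be purely atomic and the integrals to collapse to sums; granting this, the identity $\mathcal{T}_{e,\Psi}=D_\Psi T_e$ and the boundedness/invertibility of $D_\Psi$ are routine (the latter being automatic, precisely because $D_\Psi=\mathcal{T}_{e,\Psi}T_e^{-1}$, so one need not separately verify that $\{\Psi_\omega\}$ is Bessel). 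A cleaner alternative that avoids the reduction to an orthonormal basis uses the reconstruction formula \eqref{3}: it exhibits $\{\mathcal{T}^{-1}_{\Xi,\Phi}\Phi_\omega\}_{\omega\in\Omega}$ as a dual of the Riesz basis $\{\Xi_\omega\}_{\omega\in\Omega}$; by uniqueness of the dual of a Riesz basis, $\mathcal{T}^{-1}_{\Xi,\Phi}\Phi_\omega=\mathcal{S}_\Xi^{-1}\Xi_\omega$ where $\mathcal{S}_\Xi$ is the (invertible) frame operator of $\{\Xi_\omega\}$, whence $\Phi_\omega=\mathcal{T}_{\Xi,\Phi}\mathcal{S}_\Xi^{-1}\Xi_\omega$ with $\mathcal{T}_{\Xi,\Phi}\mathcal{S}_\Xi^{-1}\in GL(\mathcal{X})$, and the conclusion follows as before.
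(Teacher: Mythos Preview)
Your proof is correct and follows essentially the same route as the paper: the paper defines $S=\mathcal{T}_{\Xi,\Phi}(U^{\ast})^{-1}$ directly and verifies $Se_k=\Phi_k$, whereas you first interpose the reduction $\Psi_\omega=U^{-1}\Phi_\omega$ and then observe $\mathcal{T}_{e,\Psi}e_k=\Psi_k$; these are the same computation, since your $U\,\mathcal{T}_{e,\Psi}$ is exactly the paper's $S$. Your remarks on the atomic-measure subtlety and the alternative argument via uniqueness of the dual of a Riesz basis are valid observations but go beyond what the paper addresses.
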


\begin{proof}
	Consider the case where $\{\Xi_{\omega}\}_{\omega\in\Omega}$ is a Riesz basis. There exists a bijective linear operator $U$ belonging to $GL(\mathcal{X})$, satisfying $\Xi_{\omega} = U e_{\omega}$ for every $\omega$ in $\Omega$, with $\{e_{k}\}_{k \in \Omega}$ being an orthonormal basis of $\mathcal{X}$.
	
	Now, define the linear operator
	\begin{equation*}
	S: \mathcal{X} \rightarrow \mathcal{X}, \quad S\xi = \mathcal{T}_{\Xi,\Phi}(U^{\ast})^{-1} \xi
	\end{equation*}
	For each $k$ in $\Omega$, we have
	
	\begin{align*}
	S e_{k} = \mathcal{T}_{\Xi,\Phi}(U^{\ast})^{-1}e_{k} &= \int_{\Omega} \langle (U^{\ast})^{-1}e_{k}, \Xi_{\omega} \rangle \Phi_{\omega} d\mu (\omega)\\
	&= \int_{\Omega} \langle (U^{\ast})^{-1}e_{k}, U e_{\omega} \rangle \Phi_{\omega} d\mu(\omega)\\
	&= \int_{\Omega} \langle U^{\ast} (U^{\ast})^{-1}e_{k}, e_{\omega} \rangle \Phi_{\omega} d\mu(\omega)\\
	&= \int_{\Omega} \langle e_{k}, e_{\omega} \rangle \Phi_{\omega} d\mu (\omega)\\
	&= \Phi_{k}
	\end{align*}
	
	Thus, it is established that $\{\Phi_{\omega} \}_{\omega \in \Omega}$ forms a Riesz basis for $\mathcal{X}$. Similarly, one can demonstrate that $\{\Xi_{\omega}\}_{\omega\in\Omega}$ is a Riesz basis if $\{\Phi_{\omega}\}_{\omega\in\Omega}$ is a Riesz basis.
\end{proof}

\begin{remark}
	A positive operator within a complex Hilbert space $ \mathcal{X} $ is self-adjoint. This characteristic allows for the analysis of the effects exerted by two distinct operators on sequences within a biframe. This exploration utilizes the self-adjointness property of the positive, continuous biframe operator $ \mathcal{T}_{\Xi,\Phi} $.
\end{remark}

\begin{theorem}\label{t3.12}
	Assuming that $(\Xi, \Phi) = (\{\Xi_{\omega}\}_{\omega \in \Omega}, \{\Phi_{\omega}\}_{\omega \in \Omega})$ forms a biframe for $\mathcal{X}$ with the continuous biframe operator $\mathcal{T}_{\Xi,\Phi}$, the following statements hold true:
	\begin{itemize}
		\item [(i)] The tuple $(\{S\Xi_{\omega}\}_{\omega \in \Omega}, \{U\Phi_{\omega}\}_{\omega \in \Omega})$ constitutes a biframe for $\mathcal{X}$, with certain operators $S$ and $U$ in $B(\mathcal{X})$, if and only if there exist operators $\mathcal{Q} \in B^{+}_{b,b} (\mathcal{X})$ and $V, W \in B(\mathcal{X})$, satisfying $\mathcal{T}W^{\ast} = I$ and $S = \mathcal{Q}^{c}W\mathcal{T}^{-a}_{\Xi,\Phi}$, $U = \mathcal{Q}^{d} V\mathcal{T}^{-b}_{\Xi,\Phi}$, for $a, b, c, d \in \mathbb{R}$ with $a+b=1$ and $c+d=1$.
		\item [(ii)] The pair $(\{S\Xi_{\omega}\}_{\omega \in \Omega}, \{U\Phi_{\omega}\}_{\omega \in \Omega})$ serves as a continuous Parseval biframe for $\mathcal{X}$, with some operators $S$ and $U$ in $B(\mathcal{X})$, if and only if operators $V, W \in B(\mathcal{X})$ exist, ensuring $V W^{\ast} = I$ and $S = W\mathcal{T}^{-a}_{\Xi,\Phi}$, $U = V\mathcal{T}^{-b}_{\Xi,\Phi}$, where $a, b \in \mathbb{R}$ with $a+b=1$.
	\end{itemize}
	Particularly, $(\{\Xi_{\omega}\}_{\omega \in \Omega}, \{\Phi_{\omega}\}_{\omega \in \Omega})$ is a Parseval biframe if and only if $U S^{\ast} = I$.
\end{theorem}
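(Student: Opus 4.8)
The plan is to reduce all three assertions to the canonical factorization of positive invertible operators supplied by Proposition \ref{p1}, the bridge being an identity that expresses the biframe operator of the transformed pair in terms of $\mathcal{T}_{\Xi,\Phi}$. First I would compute, for $\xi\in\mathcal{X}$, that $\mathcal{T}_{S\Xi,U\Phi}\xi=\int_{\Omega}\langle \xi,S\Xi_{\omega}\rangle U\Phi_{\omega}\,d\mu(\omega)=U\int_{\Omega}\langle S^{\ast}\xi,\Xi_{\omega}\rangle \Phi_{\omega}\,d\mu(\omega)=U\mathcal{T}_{\Xi,\Phi}S^{\ast}\xi$, so that $\mathcal{T}_{S\Xi,U\Phi}=U\mathcal{T}_{\Xi,\Phi}S^{\ast}$. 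By Theorem \ref{t4} the operator $\mathcal{T}_{\Xi,\Phi}$ is positive, bounded and invertible, hence $\mathcal{T}_{\Xi,\Phi}\in GL^{+}(\mathcal{X})$ and is self-adjoint, so its real powers $\mathcal{T}_{\Xi,\Phi}^{-a}$ are well defined and self-adjoint via the functional calculus.

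For (i), I would argue as follows. By Theorem \ref{t8}, the pair $(\{S\Xi_{\omega}\},\{U\Phi_{\omega}\})$ is a continuous biframe precisely when $\mathcal{T}_{S\Xi,U\Phi}$ is positive and bounded; writing $\mathcal{Q}:=\mathcal{T}_{S\Xi,U\Phi}$, this places $\mathcal{Q}$ in $GL^{+}(\mathcal{X})=B^{+}_{b,b}(\mathcal{X})$. Thus the biframe property is equivalent to the existence of $\mathcal{Q}\in B^{+}_{b,b}(\mathcal{X})$ with $\mathcal{Q}=U\mathcal{T}_{\Xi,\Phi}S^{\ast}$. I then apply Proposition \ref{p1} with $\mathcal{S}_{2}=\mathcal{Q}$, $\mathcal{S}_{1}=\mathcal{T}_{\Xi,\Phi}$ and $\mathcal{V}=S$: it yields exactly the claimed factorizations $S=\mathcal{Q}^{c}W\mathcal{T}_{\Xi,\Phi}^{-a}$ and $U=\mathcal{Q}^{d}V\mathcal{T}_{\Xi,\Phi}^{-b}$ with $VW^{\ast}=I$ and $a+b=c+d=1$ (here the roles of the two free operators $\mathcal{T},\mathcal{W}$ of Proposition \ref{p1} are played by $V$ and $W$, so the compatibility condition reads $VW^{\ast}=I$). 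Conversely, given such data, Proposition \ref{p1} reconstitutes $\mathcal{Q}=U\mathcal{T}_{\Xi,\Phi}S^{\ast}=\mathcal{T}_{S\Xi,U\Phi}\in B^{+}_{b,b}(\mathcal{X})$, which is in particular positive and bounded, so Theorem \ref{t8} returns the biframe property.

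Part (ii) is the specialization $\mathcal{Q}=I$ of (i): the pair is a continuous Parseval biframe exactly when $\mathcal{T}_{S\Xi,U\Phi}=I$, and since $I^{c}=I^{d}=I$ the factorizations collapse to $S=W\mathcal{T}_{\Xi,\Phi}^{-a}$, $U=V\mathcal{T}_{\Xi,\Phi}^{-b}$ with $VW^{\ast}=I$ and $a+b=1$. For the closing remark I would specialize the identity $\mathcal{T}_{S\Xi,U\Phi}=U\mathcal{T}_{\Xi,\Phi}S^{\ast}$ to the situation in which the underlying pair $(\{\Xi_{\omega}\},\{\Phi_{\omega}\})$ is Parseval, i.e. $\mathcal{T}_{\Xi,\Phi}=I$; then $\mathcal{T}_{S\Xi,U\Phi}=US^{\ast}$, so the transformed pair is Parseval if and only if $US^{\ast}=I$.

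The main obstacle I anticipate is the first step: justifying the interchange that produces $\mathcal{T}_{S\Xi,U\Phi}=U\mathcal{T}_{\Xi,\Phi}S^{\ast}$, which rests on the boundedness of $S,U$ and the weak interpretation of the vector integral, and then confirming that $\mathcal{Q}=\mathcal{T}_{S\Xi,U\Phi}$ genuinely belongs to the positive invertible class $B^{+}_{b,b}(\mathcal{X})$ demanded by Proposition \ref{p1}. Once this identity and the membership $\mathcal{Q}\in B^{+}_{b,b}(\mathcal{X})$ are secured, parts (i), (ii) and the final remark are immediate readings of Proposition \ref{p1} under the parameter choices $\mathcal{Q}$ arbitrary, $\mathcal{Q}=I$, and $\mathcal{T}_{\Xi,\Phi}=I$ respectively.
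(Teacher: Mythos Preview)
Your proposal is correct and follows essentially the same route as the paper: both arguments hinge on the identity $\mathcal{T}_{S\Xi,U\Phi}=U\mathcal{T}_{\Xi,\Phi}S^{\ast}$ and then read off the factorizations (and their converse) directly from Proposition~\ref{p1}, with part~(ii) obtained by specializing to $\mathcal{Q}=I$. Your write-up is in fact a bit more explicit than the paper's in invoking Theorems~\ref{t4} and~\ref{t8} to justify that $\mathcal{T}_{S\Xi,U\Phi}\in B^{+}_{b,b}(\mathcal{X})$, and in spelling out how the concluding ``particular'' clause follows from the computed identity.
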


\begin{proof}
	(i) Assume that $ (S\Xi, U\Phi) = (\{S\Xi_{\omega}\}_{\omega \in \Omega}, \{U\Phi_{\omega}\}_{\omega \in \Omega}) $ constitutes a continuous biframe for $ \mathcal{X} $ with the continuous biframe operator $ \mathcal{T}_{S\Xi, U\Phi} $, where $ S, U \in B(\mathcal{X}) $. For an element $ \xi \in \mathcal{X} $,
	\begin{equation}\label{6}
	\mathcal{T}_{S\Xi, U\Phi}\xi = \int_{\Omega} \langle \xi, S\Xi_{\omega} \rangle U\Phi_{\omega} d\mu (\omega) = U \int_{\Omega}\langle S^{\ast}\xi, \Xi_{\omega} \rangle \Phi_{\omega} d\mu (\omega) = U \mathcal{T}_{\Xi, \Phi}S^{\ast}\xi.
	\end{equation}
	This leads to the conclusion that the operators $ \mathcal{T}_{\Xi, \Phi} $ and $ \mathcal{T}_{S\Xi, U\Phi} $ belong to $ B^{+}_{b,b}(\mathcal{X}) $ and also to $ GL^{+}(\mathcal{X}) $.
	
	By reference \ref{6} and Proposition \ref{p1}, there exist operators $ V, W \in B(\mathcal{X}) $ such that $ VW^{\ast} = I $ and
	\begin{equation}
	S = \mathcal{T}^{r}_{S\Xi, U\Phi} W \mathcal{T}^{-a}_{\Xi, \Phi}, \quad U = \mathcal{T}^{t}_{S\Xi, U\Phi} V \mathcal{T}^{-b}_{\Xi, \Phi}.
	\end{equation}
	Conversely, let $ \mathcal{Q} \in B^{+}_{b,b}(\mathcal{X}) $, and suppose $ T $ and $ W $ are operators in $ B(\mathcal{X}) $ such that $ TW^{\ast} = I $ and
	\begin{equation}
	S = \mathcal{Q}^{r} W \mathcal{T}^{-a}_{\Xi, \Phi}, \quad U = \mathcal{Q}^{t} V \mathcal{T}^{-b}_{\Xi, \Phi}.
	\end{equation}
	
	Consequently, $ \mathcal{Q} = U \mathcal{T}_{\Xi, \Phi} S^{\ast} $ and
	\begin{equation*}
	\mathcal{Q}\xi = U \mathcal{T}_{\Xi, \Phi}S^{\ast}\xi = U \int_{\Omega} \langle S^{\ast}\xi, \Xi_{\omega} \rangle \Phi_{\omega} d\mu(\omega) =
	\int_{\Omega}\langle \xi, S\Xi_{\omega} \rangle U\Phi_{\omega} d\mu(\omega).
	\end{equation*}
	This implies that $ \mathcal{Q} $ is part of $ B^{+}_{b,b}(\mathcal{X}) $.
	
	For part (ii), it directly follows from part (i).
\end{proof}

\begin{corollary}\label{c13}
	Assume $ S, U \in B(\mathcal{X}) $. Given the sequences $\{\Xi_{\omega}\}_{\omega \in \Omega}$ and $\{\Phi_{\omega}\}_{\omega \in \Omega}$ within $\mathcal{X}$, the following statements are equivalent: $(\{S\Xi_{\omega}\}_{\omega \in \Omega}, \{U\Phi_{\omega}\}_{\omega \in \Omega})$ forms a Parseval continuous biframe if and only if $ US^{\ast} = I$.
	
	\begin{itemize}
		\item[(i)] The sequences $\{\Xi_{\omega}\}_{\omega \in \Omega}$ and $\{\Phi_{\omega}\}_{\omega \in \Omega}$ constitute dual frames in $\mathcal{X}$.
		\item[(ii)] The sequences $\{\Xi_{\omega}\}_{\omega \in \Omega}$ and $\{\Phi_{\omega}\}_{\omega \in \Omega}$ are biorthogonal frames for $\mathcal{X}$.
	\end{itemize}     
\end{corollary}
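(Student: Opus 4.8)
The plan is to read the corollary as asserting that, under either of the hypotheses (i) or (ii) imposed on the \emph{original} pair $(\{\Xi_{\omega}\}, \{\Phi_{\omega}\})$, the transformed pair $(\{S\Xi_{\omega}\}, \{U\Phi_{\omega}\})$ is a Parseval continuous biframe precisely when $US^{\ast} = I$. The single computational engine for both cases is the operator factorization already recorded in \eqref{6}, namely $\mathcal{T}_{S\Xi, U\Phi} = U\,\mathcal{T}_{\Xi,\Phi}\,S^{\ast}$, together with the observation that a continuous biframe is Parseval exactly when its biframe operator equals $\mathcal{I}$: indeed $C=D=1$ in \eqref{1} forces $\langle \mathcal{T}_{\Xi,\Phi}\xi,\xi\rangle = \|\xi\|^2$ for every $\xi$, and a positive self-adjoint operator with this quadratic form must be $\mathcal{I}$.

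First I would treat hypothesis (i). If $\{\Xi_{\omega}\}$ and $\{\Phi_{\omega}\}$ are dual frames, then by the reconstruction identity (ii) of the preliminaries we have $\xi = \int_{\Omega} \langle \xi, \Xi_{\omega}\rangle \Phi_{\omega}\, d\mu(\omega)$ for all $\xi$, that is $\mathcal{T}_{\Xi,\Phi} = \mathcal{I}$. Substituting into \eqref{6} yields $\mathcal{T}_{S\Xi, U\Phi} = US^{\ast}$. Hence if $US^{\ast} = I$, then $\mathcal{T}_{S\Xi, U\Phi} = \mathcal{I}$ is bounded and positive, so by Theorem \ref{t8} the pair $(\{S\Xi_{\omega}\}, \{U\Phi_{\omega}\})$ is a continuous biframe, and being governed by the identity operator it is Parseval. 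Conversely, if $(\{S\Xi_{\omega}\}, \{U\Phi_{\omega}\})$ is Parseval then its biframe operator is $\mathcal{I}$, so $US^{\ast} = \mathcal{T}_{S\Xi, U\Phi} = I$. This settles case (i).

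Next I would reduce hypothesis (ii) to the same point by showing that biorthogonality of the two frames again forces $\mathcal{T}_{\Xi,\Phi} = \mathcal{I}$, after which the argument of the previous paragraph applies verbatim and the equivalence $(\{S\Xi_{\omega}\}, \{U\Phi_{\omega}\})$ Parseval $\iff US^{\ast} = I$ follows immediately. The hard part will be exactly this reduction: the biorthogonality relation $\langle \Xi_i, \Phi_j\rangle = \delta_{i,j}$ is phrased in discrete index notation, so to use it inside the integral $\int_{\Omega} \langle \xi,\Xi_{\omega}\rangle \Phi_{\omega}\, d\mu(\omega)$ one must interpret it consistently within the measure space $(\Omega,\mu)$ and verify that expanding $\xi$ against $\{\Xi_{\omega}\}$ and pairing with $\{\Phi_{\omega}\}$ recovers $\xi$. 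This is the only step that is not a formal manipulation of \eqref{6}, and it is where the continuous setting demands care; the discrete Riesz-basis situation, in which biorthogonal frames are automatically dual, is the model I would mimic. Once $\mathcal{T}_{\Xi,\Phi} = \mathcal{I}$ is secured, the conclusion for (ii) coincides with that of (i), and the two hypotheses are thereby shown to produce the same characterization $US^{\ast} = I$.
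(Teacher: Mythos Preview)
Your plan is correct and mirrors the paper's approach: both hypotheses are reduced to $\mathcal{T}_{\Xi,\Phi}=\mathcal{I}$, after which the factorization \eqref{6} gives $\mathcal{T}_{S\Xi,U\Phi}=US^{\ast}$ and the Parseval equivalence follows. The paper's proof in fact supplies only the computation you flag as the hard part in case~(ii)---it expands the second copy of $\xi$ via a frame reconstruction for $\{\Xi_{\omega}\}$, uses biorthogonality $\langle \Phi_{\omega},\Xi_{k}\rangle=\delta_{\omega,k}$ to collapse the resulting double sum, and obtains $\langle \mathcal{T}_{\Xi,\Phi}\xi,\xi\rangle=\|\xi\|^{2}$---while leaving case~(i) implicit, exactly as you note it is immediate from the definition of dual frames.
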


\begin{proof}
	Assuming that the sequences $\{\Xi_{\omega}\}_{\omega \in \Omega}$ and $\{\Phi_{\omega}\}_{\omega \in \Omega}$ are biorthogonal frames in $\mathcal{X}$, consider an element $\xi \in \mathcal{X}$. The following equalities then hold:
	\begin{align*}
	\int_{\Omega} \langle \xi, \Xi_{\omega} \rangle \langle \Phi_{\omega}, \xi \rangle d\mu(\omega) &= \int_{\Omega} \langle \xi, \Xi_{\omega} \rangle \langle \Phi_{\omega}, \sum_{k=1}^{\infty} \langle \xi, \mathcal{T}^{-1}_{\Xi,\Phi}\Xi_{k} \rangle \Xi_{k} \rangle d\mu(\omega)\\
	&= \int_{\Omega} \sum_{k=1}^{\infty}\langle \xi, \Xi_{\omega} \rangle \langle \mathcal{T}^{-1}_{\Xi,\Phi}\Xi_{k}, \xi \rangle \langle \Phi_{\omega}, \Xi_{k} \rangle d\mu (\omega) \\
	&= \int_{\Omega} \langle \xi, \Xi_{\omega} \rangle \langle \mathcal{T}^{-1}_{\Xi,\Phi}\Xi_{\omega}, \xi \rangle d\mu (\omega) \\
	&= \Vert \xi \Vert^{2}.
	\end{align*}
\end{proof}

\begin{remark}
	The set $(\{Se_{\omega}\}_{\omega\in\Omega}, \{Ue_{\omega}\}_{\omega\in\Omega})$ forms a Parseval continuous biframe if and only if the product $US^{\ast} = I$ for $\{e_{\omega}\}_{\omega\in \Omega}$, which is an orthonormal basis for $\mathcal{X}$.
\end{remark}

We now focus on characterizing the biframes where the sequences are Bessel sequences, frames, and Riesz bases.

\begin{theorem}
	Bessel sequences $\Xi = \{\Xi_{\omega}\}_{\omega\in\Omega}$ and $\Phi = \{\Phi_{\omega}\}_{\omega\in\Omega}$ together form a continuous biframe for $\mathcal{X}$ if and only if there are operators $\mathcal{Q} \in B^{+}(b,b) (\mathcal{X})$ and $S, W \in B(\mathcal{X})$ such that $SW^{\ast} = I$, $\Xi_{\omega} = \mathcal{Q}^{c} W e_{\omega}$, and $\Phi_{\omega} = \mathcal{Q}^{d} S e_{\omega}$ for every $\omega\in\Omega$, where $c, d \in \mathbb{R}$ with $c + d = 1$, and $E = \{e_{\omega}\}_{\omega\in \Omega}$ is an orthonormal basis for $\mathcal{X}$.
\end{theorem}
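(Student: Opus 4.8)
The plan is to reduce the statement to properties of the biframe operator $\mathcal{T}_{\Xi,\Phi}$, following the scheme of Theorem \ref{t3.12} and Proposition \ref{p1}; the only genuinely new ingredient is the elementary observation that a Bessel sequence is precisely the image of an orthonormal basis under a bounded operator. Throughout I would use freely that a positive invertible operator $\mathcal{Q}$ admits, for every $t\in\mathbb{R}$, a bounded positive power $\mathcal{Q}^{t}$ (functional calculus) satisfying $\mathcal{Q}^{s}\mathcal{Q}^{t}=\mathcal{Q}^{s+t}$, $\mathcal{Q}^{0}=I$ and $(\mathcal{Q}^{t})^{\ast}=\mathcal{Q}^{t}$, together with the fact that, $\{e_{\omega}\}_{\omega\in\Omega}$ being an orthonormal basis, $\int_{\Omega}\langle\eta,e_{\omega}\rangle e_{\omega}\,d\mu(\omega)=\eta$ for every $\eta\in\mathcal{X}$.

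For the forward implication I would start from a Bessel pair $(\Xi,\Phi)$ that is a continuous biframe, set $\mathcal{Q}:=\mathcal{T}_{\Xi,\Phi}$, and record via Theorem \ref{t4}(i) that $\mathcal{Q}\in GL^{+}(\mathcal{X})=B^{+}_{b,b}(\mathcal{X})$. Using the Bessel bounds and Cauchy--Schwarz I would verify that the formulas $D_{\Xi}\xi:=\int_{\Omega}\langle\xi,e_{\omega}\rangle\Xi_{\omega}\,d\mu(\omega)$ and $D_{\Phi}\xi:=\int_{\Omega}\langle\xi,e_{\omega}\rangle\Phi_{\omega}\,d\mu(\omega)$ define operators in $B(\mathcal{X})$ with $D_{\Xi}e_{\omega}=\Xi_{\omega}$ and $D_{\Phi}e_{\omega}=\Phi_{\omega}$. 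Substituting $\langle\xi,\Xi_{\omega}\rangle=\langle D_{\Xi}^{\ast}\xi,e_{\omega}\rangle$ into the definition of $\mathcal{T}_{\Xi,\Phi}$ and pulling the bounded operator $D_{\Phi}$ through the weak integral gives the key factorisation $\mathcal{T}_{\Xi,\Phi}=D_{\Phi}D_{\Xi}^{\ast}$. I would then put $W:=\mathcal{Q}^{-c}D_{\Xi}$ and $S:=\mathcal{Q}^{-d}D_{\Phi}$ in $B(\mathcal{X})$, so that $\mathcal{Q}^{c}We_{\omega}=\Xi_{\omega}$, $\mathcal{Q}^{d}Se_{\omega}=\Phi_{\omega}$, and, using $c+d=1$ and $(\mathcal{Q}^{c})^{\ast}=\mathcal{Q}^{c}$, $SW^{\ast}=\mathcal{Q}^{-d}D_{\Phi}D_{\Xi}^{\ast}\mathcal{Q}^{-c}=\mathcal{Q}^{-d}\mathcal{Q}\mathcal{Q}^{-c}=\mathcal{Q}^{1-c-d}=I$, which is the desired conclusion.

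For the converse, given $\mathcal{Q}\in B^{+}_{b,b}(\mathcal{X})$, operators $S,W\in B(\mathcal{X})$ with $SW^{\ast}=I$, and $\Xi_{\omega}=\mathcal{Q}^{c}We_{\omega}$, $\Phi_{\omega}=\mathcal{Q}^{d}Se_{\omega}$ with $c+d=1$, I would first note that $\{\Xi_{\omega}\}$ and $\{\Phi_{\omega}\}$, being the images of the orthonormal basis $\{e_{\omega}\}$ under the bounded operators $\mathcal{Q}^{c}W$ and $\mathcal{Q}^{d}S$, are Bessel sequences. Taking $D_{\Xi}=\mathcal{Q}^{c}W$ and $D_{\Phi}=\mathcal{Q}^{d}S$ in the factorisation $\mathcal{T}_{\Xi,\Phi}=D_{\Phi}D_{\Xi}^{\ast}$ established above, I obtain $\mathcal{T}_{\Xi,\Phi}=\mathcal{Q}^{d}S\,W^{\ast}\mathcal{Q}^{c}=\mathcal{Q}^{d}(SW^{\ast})\mathcal{Q}^{c}=\mathcal{Q}^{d}\mathcal{Q}^{c}=\mathcal{Q}$, which is positive, bounded and (since $\mathcal{Q}\in B^{+}_{b,b}(\mathcal{X})$) invertible. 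Theorem \ref{t8} then gives that $(\Xi,\Phi)$ is a continuous biframe for $\mathcal{X}$.

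The routine parts are the Bessel/Cauchy--Schwarz estimates and the functional-calculus manipulations of the powers $\mathcal{Q}^{t}$. The step that I expect to require the most care, and that I would write out in detail, is the factorisation $\mathcal{T}_{\Xi,\Phi}=D_{\Phi}D_{\Xi}^{\ast}$: it rests on the reproducing identity $\int_{\Omega}\langle\eta,e_{\omega}\rangle e_{\omega}\,d\mu(\omega)=\eta$ and on interchanging a bounded operator with a weakly-defined vector-valued integral, which makes it necessary to be explicit about the convention that $\{e_{\omega}\}_{\omega\in\Omega}$ together with $\mu$ forms an orthonormal basis (in effect, a discrete resolution of the identity). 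This is the only point where the measure-theoretic set-up has to be pinned down.
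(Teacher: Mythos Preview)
Your proof is correct and follows essentially the same route as the paper's: both arguments write the Bessel sequences as $\Xi_{\omega}=D_{\Xi}e_{\omega}$, $\Phi_{\omega}=D_{\Phi}e_{\omega}$ for bounded operators $D_{\Xi},D_{\Phi}$, identify $\mathcal{T}_{\Xi,\Phi}=D_{\Phi}D_{\Xi}^{\ast}$, and then split off the powers $\mathcal{Q}^{c},\mathcal{Q}^{d}$ of $\mathcal{Q}=\mathcal{T}_{\Xi,\Phi}$ (the paper does this last step by invoking Theorem~\ref{t3.12}/Proposition~\ref{p1}, whereas you carry out the computation directly). Your presentation is in fact cleaner, since you keep separate names for the synthesis operators $D_{\Xi},D_{\Phi}$ and the resulting $W,S$, avoiding the notational overloading in the paper's version.
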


\begin{proof}
	Consider the pair $(\Xi, \Phi) = \{\Xi_{ \omega}\}_{\omega \in \Omega}, \{\Phi_{\omega}\}_{\omega \in \Omega}$ as a continuous biframe with a continuous biframe operator $\mathcal{T}_{\Xi,\Phi}$, composed of Bessel sequences $\{\Xi_{\omega}\}_{\omega \in \Omega}$ and $\{\Phi_{\omega}\}_{\omega \in \Omega}$. Since both $\{\Xi_{\omega}\}_{\omega \in \Omega}$ and $\{\Phi_{\omega}\}_{\omega \in \Omega}$ are Bessel sequences, there exist operators $S, W \in B(\mathcal{X})$ and a set $E = \{e_{\omega}\}_{\omega\in\Omega}$ forming an orthonormal basis for $\mathcal{X}$, such that 
	\begin{equation}
	\Xi_{\omega} = Se_{\omega} = SE \quad \text{and} \quad \Phi_{\omega} = We_{\omega} = WE \quad \text{for all} \quad \omega \in \Omega.
	\end{equation}    
	Referring to Theorem \ref{t3.12}, we find $V, W \in B(\mathcal{X})$ such that $VW^{\ast} = I$. Furthermore, by \ref{6},
	\begin{equation}
	S = \mathcal{T}^{c}_{SE, WE} W, \quad W = \mathcal{T}^{d}_{SE, WE} V \quad \text{for all} \quad c, d \in \mathbb{R}, \quad c + d = 1.
	\end{equation}
	This leads to
	\begin{equation*}
	\Xi_{\omega} = Se_{\omega} = \mathcal{T}^{c}_{SE, WE} W e_{\omega} = \mathcal{T}^{c}_{\Xi, \Phi} W e_{\omega}
	\end{equation*}
	and
	\begin{equation}
	\Phi_{\omega} = We_{\omega} = \mathcal{T}^{d}_{SE, WE} V e_{\omega} = \mathcal{T}^{d}_{\Xi, \Phi} V e_{\omega}, \quad \text{for all} \quad \omega \in \Omega.    
	\end{equation}
	Assume the existence of an orthonormal basis $\{e_{\omega}\}_{\omega}$ and operators $\mathcal{Q} \in B^{+}(C, D)(\mathcal{X})$ and $V, W \in \mathcal{X}$ such that $VW^{\ast} = I$, $\Xi_{\omega} = \mathcal{Q}^{c} V e_{\omega}$, and $\Phi_{\omega} = \mathcal{Q}^{d} V e_{\omega}$ for every $\omega \in \Omega$, for all $c, d \in \mathbb{R}, \; c + d = 1$.
	
	For any $\xi \in \mathcal{X}$, it follows that
	\begin{align*}
	\int_{\Omega} \langle \xi, \Xi_{\omega} \rangle \Phi_{\omega} \, d\mu(\omega) &= \int_{\Omega} \langle \xi, \mathcal{Q}^{c} W e_{\omega} \rangle \mathcal{Q}^{d} V e_{\omega} \, d\mu(\omega) \\
	&= \mathcal{Q}^{d} V \int_{\Omega} \langle W^{\ast} \mathcal{Q}^{c} \xi, e_{\omega} \rangle e_{\omega} \, d\mu(\omega) \\
	&= \mathcal{Q}^{d} V W^{\ast} \mathcal{Q}^{c} \xi \\
	&= \mathcal{Q} \xi.
	\end{align*}
\end{proof}

\begin{corollary}
	The Bessel sequences $\{\Xi_{\omega}\}$ and $\{\Phi_{\omega}\}$ form a continuous Parseval biframe for $\mathcal{X}$ if and only if there exist operators $S, W \in B(\mathcal{X})$ such that $SW^{\ast} = I$, $\Xi_{\omega} = W e_{\omega}$, and $\Phi_{\omega} = S e_{\omega}$ for every $\omega \in \Omega$, whenever $\{e_{\omega}\}_{\omega}$ is an orthonormal basis for $\mathcal{X}$.
	
	This implies that $\{\Xi_{\omega}\}$ and $\{\Phi_{\omega}\}$ are dual frames.
\end{corollary}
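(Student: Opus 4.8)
The plan is to obtain this corollary as the Parseval specialization of the theorem immediately preceding it, after first pinning down what ``Parseval'' means at the operator level. The key preliminary observation is that $(\Xi,\Phi)$ is a continuous Parseval biframe precisely when its biframe operator equals the identity, $\mathcal{T}_{\Xi,\Phi}=I$. Indeed, by Theorem \ref{t4}(i) we have $\mathcal{T}_{\Xi,\Phi}\in GL^{+}(\mathcal{X})$, and the identity $\langle \mathcal{T}_{\Xi,\Phi}\xi,\xi\rangle=\int_{\Omega}\langle\xi,\Xi_{\omega}\rangle\langle\Phi_{\omega},\xi\rangle\,d\mu(\omega)$ shows the biframe is Parseval iff $\langle \mathcal{T}_{\Xi,\Phi}\xi,\xi\rangle=\|\xi\|^{2}$ for all $\xi$; since a positive operator on a complex Hilbert space is self-adjoint, polarization upgrades this quadratic identity to $\mathcal{T}_{\Xi,\Phi}=I$.

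For the forward direction, assume the Bessel sequences $\{\Xi_{\omega}\}$ and $\{\Phi_{\omega}\}$ form a continuous Parseval biframe. Bessel-ness of each sequence furnishes bounded synthesis operators, that is, $W,S\in B(\mathcal{X})$ and an orthonormal basis $\{e_{\omega}\}_{\omega\in\Omega}$ of $\mathcal{X}$ with $\Xi_{\omega}=We_{\omega}$ and $\Phi_{\omega}=Se_{\omega}$ for every $\omega$, exactly as invoked in the proof of the preceding theorem. Then, for every $\xi\in\mathcal{X}$,
\[
\xi=\mathcal{T}_{\Xi,\Phi}\xi=\int_{\Omega}\langle\xi,We_{\omega}\rangle Se_{\omega}\,d\mu(\omega)=S\int_{\Omega}\langle W^{\ast}\xi,e_{\omega}\rangle e_{\omega}\,d\mu(\omega)=SW^{\ast}\xi ,
\]
using boundedness of $S$ to pull it out of the integral and the resolution of the identity $\int_{\Omega}\langle\eta,e_{\omega}\rangle e_{\omega}\,d\mu(\omega)=\eta$; hence $SW^{\ast}=I$. (Equivalently one may quote Theorem \ref{t3.12}(ii) with $\mathcal{T}_{\Xi,\Phi}=I$, which forces $\mathcal{T}^{-a}_{\Xi,\Phi}=\mathcal{T}^{-b}_{\Xi,\Phi}=I$ and collapses the representation to $S=W$-data with $VW^{\ast}=I$.)

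Conversely, suppose $S,W\in B(\mathcal{X})$ satisfy $SW^{\ast}=I$ with $\Xi_{\omega}=We_{\omega}$ and $\Phi_{\omega}=Se_{\omega}$ for an orthonormal basis $\{e_{\omega}\}$. A direct computation gives, for all $\xi\in\mathcal{X}$,
\[
\int_{\Omega}\langle\xi,\Xi_{\omega}\rangle\langle\Phi_{\omega},\xi\rangle\,d\mu(\omega)=\int_{\Omega}\langle W^{\ast}\xi,e_{\omega}\rangle\langle e_{\omega},S^{\ast}\xi\rangle\,d\mu(\omega)=\langle W^{\ast}\xi,S^{\ast}\xi\rangle=\langle SW^{\ast}\xi,\xi\rangle=\|\xi\|^{2},
\]
so $(\Xi,\Phi)$ is a continuous Parseval biframe. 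Finally, the dual-frame claim is immediate: both sequences are Bessel by hypothesis, and the Parseval identity $\int_{\Omega}\langle\xi,\Xi_{\omega}\rangle\Phi_{\omega}\,d\mu(\omega)=\mathcal{T}_{\Xi,\Phi}\xi=\xi$ is precisely condition (ii) of the dual-frame criterion recalled in Section 2, whence $\{\Xi_{\omega}\}$ and $\{\Phi_{\omega}\}$ are dual frames. The only point that needs care — and the closest thing to an obstacle here — is the technical justification of the interchange of the bounded operators $S$ and $W^{\ast}$ with the integral and of the continuous resolution-of-identity step; both are handled exactly as in the proof of Theorem \ref{t3.12}, using boundedness together with the weak measurability built into the Bessel/biframe hypotheses.
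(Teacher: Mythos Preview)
Your proof is correct. The paper states this corollary without proof, intending it as the Parseval specialization of the immediately preceding theorem (set $\mathcal{Q}=I$, whence $\mathcal{Q}^{c}=\mathcal{Q}^{d}=I$ and the conditions collapse to $\Xi_{\omega}=We_{\omega}$, $\Phi_{\omega}=Se_{\omega}$, $SW^{\ast}=I$); your argument carries out exactly this specialization, and your direct computation $\mathcal{T}_{\Xi,\Phi}=SW^{\ast}$ together with the dual-frame identification is precisely the intended route.
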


We revisit the notion of a generalized dual for continuous frames.

\begin{definition}
	Let $\{\Phi_{\omega}\}$ be a $g-$dual frame of the frame $\{\Xi_{\omega}\}_{\omega}$ for $\mathcal{X}$ if there exists an invertible operator $A \in B(\mathcal{X})$ such that, for all $\xi \in \mathcal{X}$, 
	the equality $\xi = \int_{\Omega} \langle A\xi, \Phi_{\omega} \rangle \Xi_{\omega} \, d\mu(\omega)$ is valid.
	If $A = I$, then $\{\Phi_{\omega}\}$ is an ordinary dual frame of $\{\Xi_{\omega}\}$, and we express this as $\xi = \int_{\Omega} \langle \xi, \Phi_{\omega} \rangle \Xi_{\omega} \, d\mu(\omega)$.
\end{definition}

\begin{theorem}
	Let $\{\Xi_{\omega}\}$ be a frame for $\mathcal{X}$ with the frame operator $\mathcal{T}_{\Xi}$. The following statements are equivalent:
	\begin{itemize}
		\item [(i)] The sequence $\{\Phi_{\omega} \}$ is a Bessel sequence for $\mathcal{X}$, and the pair $(\{\Xi_{\omega}\}_{\omega}, \{\Phi_{\omega}\}_{\omega})$ forms a continuous biframe for $\mathcal{X}$.
		\item[(ii)] $\{\Phi_{\omega}\} = \{ (\mathcal{T}_{\Xi}\mathcal{Q})^{-1}\Xi_{\omega} + \eta_{\omega} - \sum_{k=1}^{\infty} \langle \mathcal{T}^{-1}_{\Xi}\Xi_{\omega}, \Xi_{k} \rangle \eta_{k}\}$, where $\{\eta_{\omega}\}_{\omega}$ is a Bessel sequence in $\mathcal{X}$, and $\mathcal{Q}$ is an element of $B^{+}_{a,a}(\mathcal{X})$.
		\item[(iii)] The sequence $\{\Phi_{\omega}\}_{\omega}$ is a frame for $\mathcal{X}$, and the pair $(\{\Xi_{\omega}\}_{\omega}, \{\Phi_{\omega}\}_{\omega})$ forms a continuous biframe for $\mathcal{X}$.
	\end{itemize}
\end{theorem}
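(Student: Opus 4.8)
The plan is to prove the three equivalences via the cycle $(\mathrm{i})\Rightarrow(\mathrm{ii})\Rightarrow(\mathrm{iii})\Rightarrow(\mathrm{i})$. Throughout I read the symbol $\sum_{k=1}^{\infty}$ as the weakly convergent vector integral $\int_{\Omega}(\cdot)\,d\mu(k)$, in keeping with the rest of the paper, and I use three standard facts: a bounded operator sends a Bessel sequence to a Bessel sequence; the analysis operator $C_{\Xi}$ and synthesis operator $D_{\eta}$ of Bessel sequences are bounded; and the identities $\int_{\Omega}\langle\zeta,\Xi_{k}\rangle\Xi_{k}\,d\mu(k)=\mathcal{T}_{\Xi}\zeta$ and $\int_{\Omega}\langle\zeta,\Xi_{k}\rangle\Phi_{k}\,d\mu(k)=\mathcal{T}_{\Xi,\Phi}\zeta$ for every $\zeta\in\mathcal{X}$. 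The only genuinely creative point is the choice of the parameters in the representation of $(\mathrm{ii})$.

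For $(\mathrm{i})\Rightarrow(\mathrm{ii})$: under $(\mathrm{i})$, Theorem \ref{t4} gives $\mathcal{S}:=\mathcal{T}_{\Xi,\Phi}\in GL^{+}(\mathcal{X})$, so I take $\mathcal{Q}:=\mathcal{S}^{-1}$ (a positive bounded invertible operator, hence an element of $B^{+}_{a,a}(\mathcal{X})$) and set $\eta_{\omega}:=\Phi_{\omega}-\mathcal{S}\mathcal{T}_{\Xi}^{-1}\Xi_{\omega}$. Since $\Xi$ is a frame, hence Bessel, and $\Phi$ is Bessel by hypothesis, $\{\eta_{\omega}\}$ is a difference of Bessel sequences and thus Bessel. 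Because $(\mathcal{T}_{\Xi}\mathcal{Q})^{-1}=\mathcal{Q}^{-1}\mathcal{T}_{\Xi}^{-1}=\mathcal{S}\mathcal{T}_{\Xi}^{-1}$, it remains only to check that, for this choice of $\eta$, the correction term vanishes; using the two identities above,
\[
\int_{\Omega}\langle\mathcal{T}_{\Xi}^{-1}\Xi_{\omega},\Xi_{k}\rangle\,\eta_{k}\,d\mu(k)=\mathcal{T}_{\Xi,\Phi}\mathcal{T}_{\Xi}^{-1}\Xi_{\omega}-\mathcal{S}\mathcal{T}_{\Xi}^{-1}\bigl(\mathcal{T}_{\Xi}\mathcal{T}_{\Xi}^{-1}\Xi_{\omega}\bigr)=0 .
\]
Consequently $(\mathcal{T}_{\Xi}\mathcal{Q})^{-1}\Xi_{\omega}+\eta_{\omega}-\int_{\Omega}\langle\mathcal{T}_{\Xi}^{-1}\Xi_{\omega},\Xi_{k}\rangle\eta_{k}\,d\mu(k)=\mathcal{S}\mathcal{T}_{\Xi}^{-1}\Xi_{\omega}+\eta_{\omega}=\Phi_{\omega}$, which is precisely the representation asserted in $(\mathrm{ii})$.

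For $(\mathrm{ii})\Rightarrow(\mathrm{iii})$: given the representation, each of its three summands is Bessel, the first and third being the images of the Bessel sequence $\Xi$ under the bounded operators $(\mathcal{T}_{\Xi}\mathcal{Q})^{-1}$ and $D_{\eta}C_{\Xi}\mathcal{T}_{\Xi}^{-1}$ respectively, and the middle one Bessel by assumption; hence $\Phi$ is Bessel and $\mathcal{T}_{\Xi,\Phi}$ is a well-defined bounded operator. Evaluating $\mathcal{T}_{\Xi,\Phi}\xi$ termwise with the same identities (and a Fubini interchange for the correction term), the contributions of $\eta_{\omega}$ and of the correction term cancel while the first term yields $\mathcal{Q}^{-1}\xi$, so $\mathcal{T}_{\Xi,\Phi}=\mathcal{Q}^{-1}$, which is positive and bounded; Theorem \ref{t8} then shows $(\Xi,\Phi)$ is a continuous biframe. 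Finally, applying the Cauchy--Schwarz inequality to the lower biframe estimate together with the upper frame bound $B_{\Xi}$ of $\Xi$ gives $\int_{\Omega}|\langle\Phi_{\omega},\xi\rangle|^{2}\,d\mu(\omega)\ge (C^{2}/B_{\Xi})\,\|\xi\|^{2}$, so $\Phi$, being Bessel with a positive lower frame bound, is a frame; this establishes $(\mathrm{iii})$. The implication $(\mathrm{iii})\Rightarrow(\mathrm{i})$ is immediate, since every frame is a Bessel sequence.

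The main obstacle is not computational. First, one has to hit on the correct parameters $\mathcal{Q}=\mathcal{T}_{\Xi,\Phi}^{-1}$ and $\eta_{\omega}=\Phi_{\omega}-\mathcal{T}_{\Xi,\Phi}\mathcal{T}_{\Xi}^{-1}\Xi_{\omega}$; these are in fact forced by the relation $\mathcal{T}_{\Xi,\Phi}=\mathcal{Q}^{-1}$ and are selected precisely so that the correction integral collapses. Second, one must make the measure-theoretic manipulations rigorous: the weak convergence of the vector integrals, the commutation of bounded operators with these integrals, and Fubini-type swaps such as $\int_{\Omega}\langle\mathcal{T}_{\Xi}^{-1}\Xi_{\omega},\Xi_{k}\rangle\eta_{k}\,d\mu(k)=\mathcal{T}_{\Xi,\Phi}(\mathcal{T}_{\Xi}^{-1}\Xi_{\omega})$. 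These are best carried out through the boundedness of the analysis and synthesis operators of $\Xi$, $\Phi$ and $\eta$ rather than pointwise on $\Omega$; once they are in place the verification is routine.
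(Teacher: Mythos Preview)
Your proof is correct and follows essentially the same cycle $(\mathrm{i})\Rightarrow(\mathrm{ii})\Rightarrow(\mathrm{iii})\Rightarrow(\mathrm{i})$ as the paper, with the identical core computation $\mathcal{T}_{\Xi,\Phi}=\mathcal{Q}^{-1}$ in the step $(\mathrm{ii})\Rightarrow(\mathrm{iii})$. The only differences are cosmetic: for $(\mathrm{i})\Rightarrow(\mathrm{ii})$ the paper invokes the known characterization of generalized duals to obtain the representation, whereas you construct $\eta_{\omega}=\Phi_{\omega}-\mathcal{T}_{\Xi,\Phi}\mathcal{T}_{\Xi}^{-1}\Xi_{\omega}$ explicitly and verify that the correction term vanishes; and for the lower frame bound of $\Phi$ the paper appeals to the $g$-dual reconstruction $\xi=\int_{\Omega}\langle \mathcal{Q}\xi,\Xi_{\omega}\rangle\Phi_{\omega}\,d\mu(\omega)$, while your Cauchy--Schwarz argument is a direct and slightly more self-contained variant.
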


\begin{proof}
	For $(i)\Rightarrow (ii)$, assume that $\{\Phi_{\omega} \}$ forms a frame for $\mathcal{X}$ and $(\{\Xi_{\omega}\}_{\omega} ,\{\Phi_{\omega}\}_{\omega})$ constitutes a continuous biframe for $\mathcal{X}$ with the biframe operator $\mathcal{T}_{\Xi,\Phi}$. By formula \ref{3}, we obtain $\mathcal{T}_{\Xi,\Phi} = \mathcal{T}_{\Phi,\Xi}$. For any $\xi \in \mathcal{X}$, we have
	\begin{equation*}
	\xi = \int_{\Omega}\langle \mathcal{T}^{-1}_{\Phi,\Xi} \xi, \Xi_{\omega} \rangle  \Phi_{\omega} \, d\mu(\omega) = \int_{\Omega} \langle \mathcal{T}^{-1}_{\Phi,\Xi} \xi, \Phi_{\omega} \rangle  \Xi_{\omega} \, d\mu(\omega).
	\end{equation*}
	Hence, $(\{\Xi_{\omega}\}_{\omega}$ and $\{\Phi_{\omega}\}_{\omega})$ are $G$-dual frames. We then have
	\begin{equation*}
	\Phi_{\omega} = (\mathcal{T}_{\Xi,\Phi}\mathcal{T}_{\Xi})^{-1}\Xi_{\omega} + e_{\omega} -\sum_{a=1}^{\infty} \langle \mathcal{T}^{-1}_{\Xi}\Xi_{\omega}, \Xi_{a} \rangle e_{a} = \mathcal{T}_{\Xi}\mathcal{T}_{\Xi,\Phi}^{-1}\Xi_{\omega} + e_{\omega} -\sum_{a=1}^{\infty} \langle \mathcal{T}^{-1}_{\Xi}\Xi_{\omega}, \Xi_{a} \rangle e_{a},
	\end{equation*}
	where $\{e_{\omega}\}$ is a Bessel sequence in $\mathcal{X}$.
	
	For $(ii)\Rightarrow (i)$, suppose there exists an operator $\mathcal{Q} \in B^{+}_{c,c}(\mathcal{X})$ such that
	\begin{equation*}
	\{\Phi_{\omega}\} = \{ (\mathcal{T}_{\Xi}\mathcal{Q})^{-1}\Xi_{\omega} + e_{\omega} - \sum_{a=1}^{\infty} \langle \mathcal{T}^{-1}_{\Xi}\Xi_{\omega}, \Xi_{a} \rangle e_{a} \},
	\end{equation*}
	where $\{e_{\omega}\}_{\omega\in \Omega}$ is a Bessel sequence in $\mathcal{X}$ and $\mathcal{Q} \in B^{+}_{c,c}(\mathcal{X})$.
	
	For any $\xi \in \mathcal{X}$, we have
	\begin{align*}
	\int_{\Omega} \langle \xi, \Xi_{\omega}\rangle \Phi_{\omega} \, d\mu(\omega) &= \int_{\Omega} \langle \xi, \Xi_{\omega}\rangle \left( (\mathcal{T}_{\Xi}\mathcal{Q})^{-1}\Xi_{\omega} + e_{\omega} - \sum_{a=1}^{\infty} \langle \mathcal{T}^{-1}_{\Xi}\Xi_{\omega}, \Xi_{a} \rangle e_{a} \right) \, d\mu(\omega) \\
	&= \int_{\Omega} \langle \xi, \Xi_{\omega}\rangle (\mathcal{T}_{\Xi}\mathcal{Q})^{-1}\Xi_{\omega} \, d\mu(\omega) + \int_{\Omega} \langle \xi, \Xi_{\omega}\rangle e_{\omega} \, d\mu(\omega) \\
	&\quad - \int_{\Omega} \sum_{a=1}^{\infty} \langle \xi, \Xi_{\omega}\rangle \langle \mathcal{T}^{-1}_{\Xi} \Xi_{\omega}, \Xi_{a}\rangle e_{a} \, d\mu(\omega) \\
	&= \mathcal{Q}^{-1} \xi + \int_{\Omega} \langle \xi, \Xi_{\omega}\rangle e_{\omega} \, d\mu(\omega) - \int_{\Omega} \sum_{a=1}^{\infty} \langle \xi, \Xi_{\omega}\rangle \langle \mathcal{T}^{-1}_{\Xi} \Xi_{\omega}, \Xi_{a}\rangle e_{a} \, d\mu(\omega) \\
	&= \mathcal{Q}^{-1} \xi + \int_{\Omega} \langle \xi, \Xi_{\omega}\rangle e_{\omega} \, d\mu(\omega) - \int_{\Omega} \langle \xi, \Xi_{\omega}\rangle e_{\omega} \, d\mu(\omega) \\
	&= \mathcal{Q}^{-1} \xi.
	\end{align*}
	This implies that $(\{\Xi_{\omega}\}_{\omega}, \{\Phi_{\omega}\}_{\omega})$ is a continuous biframe for $\mathcal{X}$. It is easy to check that $\{\Phi_{\omega}\}$ is a Bessel sequence, and $\xi = \int_{\Omega} \langle \mathcal{Q}\xi, \Xi_{\omega}\rangle \Phi_{\omega}$, making $\{\Phi_{\omega}\}_{\omega}$ a frame for $\mathcal{X}$.
	
	For $(iii) \Rightarrow (i)$, the result is obvious.
\end{proof}

\begin{theorem}
	Assume $\Xi = \{\Xi_{\omega}\}_{\omega\in \Omega}$ is a Riesz basis for the space $\mathcal{X}$, characterized by the frame operator $\mathcal{T}_{\Xi}$. The equivalence of the following statements is established:
	\begin{itemize}
		\item [(i)] The pair $(\{\Xi_{\omega}\}_{\omega\in \Omega}, \{\Phi_{\omega}\}_{\omega\in \Omega})$ serves as a continuous biframe for the space $\mathcal{X}$.
		\item [(ii)] The set $\{\Phi_{\omega}\}_{\omega\in \Omega}$ can be expressed as $\{ (\mathcal{T}_{\Xi}\mathcal{Q})^{-1} \Xi_{\omega} \}$, where $\mathcal{Q}$ belongs to $B^{+}_{b,b}(\mathcal{X})$.
	\end{itemize}
	Notably, satisfying either condition $(i)$ or $(ii)$ leads to the conclusion that $\{\Phi_{\omega}\}_{\omega\in \Omega}$ establishes a Riesz basis for the space $\mathcal{X}$.
	
\end{theorem}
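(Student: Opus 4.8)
The plan is to handle the two implications separately, in each case reducing the statement to the interplay between the biframe operator $\mathcal{T}_{\Xi,\Phi}$ and the frame operator $\mathcal{T}_{\Xi}$, and then to read off the Riesz-basis conclusion from the explicit formulas obtained along the way.

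For $(ii)\Rightarrow(i)$ I would compute the candidate biframe operator directly. Since $(\mathcal{T}_{\Xi}\mathcal{Q})^{-1}\in GL(\mathcal{X})$ is bounded it passes through the weakly defined integral, so for every $\xi\in\mathcal{X}$
\begin{equation*}
\int_{\Omega}\langle\xi,\Xi_{\omega}\rangle(\mathcal{T}_{\Xi}\mathcal{Q})^{-1}\Xi_{\omega}\,d\mu(\omega)=(\mathcal{T}_{\Xi}\mathcal{Q})^{-1}\int_{\Omega}\langle\xi,\Xi_{\omega}\rangle\Xi_{\omega}\,d\mu(\omega)=(\mathcal{T}_{\Xi}\mathcal{Q})^{-1}\mathcal{T}_{\Xi}\xi=\mathcal{Q}^{-1}\xi ,
\end{equation*}
and weak measurability of $\Phi$ follows from that of $\Xi$ via $\langle\xi,\Phi_{\omega}\rangle=\langle((\mathcal{T}_{\Xi}\mathcal{Q})^{-1})^{\ast}\xi,\Xi_{\omega}\rangle$. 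Thus $\mathcal{T}_{\Xi,\Phi}=\mathcal{Q}^{-1}$, which is positive and bounded because $\mathcal{Q}\in B^{+}_{b,b}(\mathcal{X})$, and Theorem \ref{t8} then yields that $(\Xi,\Phi)$ is a continuous biframe.

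For $(i)\Rightarrow(ii)$ I would use the Riesz-basis hypothesis: write $\Xi_{\omega}=Ue_{\omega}$ with $U\in GL(\mathcal{X})$ and $\{e_{\omega}\}$ an orthonormal basis, so $\mathcal{T}_{\Xi}=UU^{\ast}$. Running the computation from the proof of Theorem \ref{t3.10} with $S\xi=\mathcal{T}_{\Xi,\Phi}(U^{\ast})^{-1}\xi$ gives $Se_{\omega}=\Phi_{\omega}$, hence
\begin{equation*}
\Phi_{\omega}=\mathcal{T}_{\Xi,\Phi}(U^{\ast})^{-1}e_{\omega}=\mathcal{T}_{\Xi,\Phi}(U^{\ast})^{-1}U^{-1}\Xi_{\omega}=\mathcal{T}_{\Xi,\Phi}\,\mathcal{T}_{\Xi}^{-1}\Xi_{\omega}.
\end{equation*}
Setting $\mathcal{Q}=\mathcal{T}_{\Xi,\Phi}^{-1}$, which lies in $B^{+}_{b,b}(\mathcal{X})$ since $\mathcal{T}_{\Xi,\Phi}$ is positive, bounded and invertible by Theorem \ref{t4}, we get $(\mathcal{T}_{\Xi}\mathcal{Q})^{-1}=\mathcal{Q}^{-1}\mathcal{T}_{\Xi}^{-1}=\mathcal{T}_{\Xi,\Phi}\mathcal{T}_{\Xi}^{-1}$, so $\Phi_{\omega}=(\mathcal{T}_{\Xi}\mathcal{Q})^{-1}\Xi_{\omega}$ for all $\omega$, as required.

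For the closing assertion, under $(ii)$ the operator $(\mathcal{T}_{\Xi}\mathcal{Q})^{-1}U$ belongs to $GL(\mathcal{X})$ as a composition of invertibles and $\Phi_{\omega}=(\mathcal{T}_{\Xi}\mathcal{Q})^{-1}Ue_{\omega}$, so $\{\Phi_{\omega}\}$ is a Riesz basis; under $(i)$ it is immediate from Theorem \ref{t3.10}. The point I expect to need the most care is the identity $\int_{\Omega}\langle e_{k},e_{\omega}\rangle\Phi_{\omega}\,d\mu(\omega)=\Phi_{k}$ underpinning the formula for $\Phi_{\omega}$: it tacitly uses that a Riesz basis is indexed against a discrete portion of $(\Omega,\mu)$ — precisely the subtlety already present in the proof of Theorem \ref{t3.10} — together with the routine justification that bounded operators commute with the weak integral.
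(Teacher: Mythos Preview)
Your proof is correct and follows essentially the same route as the paper: for $(i)\Rightarrow(ii)$ you write $\Xi_{\omega}=Ue_{\omega}$, use the operator $S=\mathcal{T}_{\Xi,\Phi}(U^{\ast})^{-1}$ from Theorem \ref{t3.10} to obtain $\Phi_{\omega}=\mathcal{T}_{\Xi,\Phi}\mathcal{T}_{\Xi}^{-1}\Xi_{\omega}$, and set $\mathcal{Q}=\mathcal{T}_{\Xi,\Phi}^{-1}$, which is exactly the paper's computation. For $(ii)\Rightarrow(i)$ you in fact supply more than the paper, which merely remarks that this direction ``follows similarly and can be easily derived''; your direct verification that $\mathcal{T}_{\Xi,\Phi}=\mathcal{Q}^{-1}$ and appeal to Theorem \ref{t8} is the natural completion, and your caveat about the discrete indexing implicit in $\int_{\Omega}\langle e_{k},e_{\omega}\rangle\Phi_{\omega}\,d\mu(\omega)=\Phi_{k}$ is well placed.
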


\begin{proof}
	For $(i)\Rightarrow (ii)$, consider that $(\{\Xi_{\omega}\}_{\omega\in \Omega}, \{\Phi_{\omega}\}_{\omega\in \Omega})$ forms a continuous biframe for $\mathcal{X}$ with the frame operator $\mathcal{T}_{\Xi,\Phi}$. Given that $\{\Xi_{\omega}\}_{\omega\in \Omega}$ is a Riesz basis for $\mathcal{X}$, there exists an invertible operator $U \in B(\mathcal{X})$ such that $\Xi_{\omega} = Ue_{\omega}$ for all $\omega \in \Omega$, where $\{e_{\omega}\}_{\omega \in \Omega}$ is an orthonormal basis for $\mathcal{X}$.
	
	Referencing Theorem \ref{t3.10}, $\{\Phi_{\omega}\}_{\omega\in \Omega}$ is also a Riesz basis for $\mathcal{X}$. Therefore, we have
	\begin{align*}
	\Phi_{\omega} &= \mathcal{T}_{\Xi,\Phi}(U^{\ast})^{-1}U^{-1}e_{\omega} = \mathcal{T}_{\Xi,\Phi}U^{*-1} U^{-1} \Xi_{\omega} \\
	&= \mathcal{T}_{\Xi,\Phi}(UU^{*})^{-1} \Xi_{\omega} \\
	&= \mathcal{T}_{\Xi,\Phi} \mathcal{T}^{-1}_{\Xi}  \Xi_{\omega} \\
	&= (\mathcal{T}_{\Xi}\mathcal{T}^{-1}_{\Xi,\Phi})^{-1} \Xi_{\omega}.
	\end{align*}
	The proof for $(ii) \Rightarrow (i)$ follows similarly and can be easily derived.
	
\end{proof}

\begin{definition}
	Let $\{e_{\omega}\}_{\omega}$ be an orthonormal basis for $\mathcal{X}$.
	
	Define the set $[\{e_{\omega}\}] = \{\{\Xi_{\omega}\} \mid (\{e_{\omega}\}_{\omega},\{\Xi_{\omega}\}_{\omega})$ constitutes a continuous biframe for $\mathcal{X}\}$.
	
\end{definition}

\begin{proposition} \label{p2}
	Let $E = \{e_{\omega} \}_{\omega \in \Omega}$ be an orthonormal basis for the Hilbert space $\mathcal{X}$.
	
	The sequence $\Xi = \{\Xi_{\omega}\}_{\omega}$ is part of $[\{e_{\omega}\}]$ 
	if and only if there is an operator $U \in B^{+}_{b,b}(\mathcal{X})$ such that $\Xi_{\omega} = Ue_{\omega}$ for every $\omega \in \Omega$.
\end{proposition}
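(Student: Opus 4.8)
The plan is to recognize that, because the first component of the pair is an orthonormal basis, the biframe operator $\mathcal{T}_{E,\Xi}$ \emph{is} the operator $U$ in question, and then to pass back and forth between the defining inequality \eqref{1} and the operator-theoretic descriptions supplied by Theorem \ref{t4} and Theorem \ref{t8}.

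For the implication ``$\Xi\in[\{e_\omega\}]\Rightarrow\Xi_\omega=Ue_\omega$'', I would start from the hypothesis that $(\{e_\omega\}_\omega,\{\Xi_\omega\}_\omega)$ is a continuous biframe. By Theorem \ref{t4}(i) its biframe operator $\mathcal{T}_{E,\Xi}$ is bounded, positive and invertible, hence $\mathcal{T}_{E,\Xi}\in B^{+}_{b,b}(\mathcal{X})$. I would then evaluate it on the basis vectors, using the reproducing property of $E$ exactly as in the proof of Theorem \ref{t3.10}:
\[
\mathcal{T}_{E,\Xi}e_k=\int_{\Omega}\langle e_k,e_\omega\rangle\,\Xi_\omega\,d\mu(\omega)=\Xi_k ,\qquad k\in\Omega .
\]
Setting $U:=\mathcal{T}_{E,\Xi}$ then yields $\Xi_\omega=Ue_\omega$ for every $\omega\in\Omega$, with $U\in B^{+}_{b,b}(\mathcal{X})$, as required.

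For the converse, assume $\Xi_\omega=Ue_\omega$ for all $\omega$, with $U\in B^{+}_{b,b}(\mathcal{X})$. Weak measurability of $\Xi$ is immediate, since $\omega\mapsto\langle\xi,\Xi_\omega\rangle=\langle U\xi,e_\omega\rangle$ is measurable for every $\xi$ (here $U$ is self-adjoint, being positive, and $E$ is weakly measurable). Next I would compute the candidate biframe operator: for $\xi\in\mathcal{X}$,
\[
\int_{\Omega}\langle\xi,e_\omega\rangle\,\Xi_\omega\,d\mu(\omega)
= U\!\int_{\Omega}\langle\xi,e_\omega\rangle\,e_\omega\,d\mu(\omega)
= U\xi ,
\]
so that $\mathcal{T}_{E,\Xi}=U$ is bounded and positive on $\mathcal{X}$. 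Theorem \ref{t8} then gives that $(\{e_\omega\}_\omega,\{\Xi_\omega\}_\omega)$ is a continuous biframe, i.e. $\Xi\in[\{e_\omega\}]$. (Alternatively, the biframe bounds can be read off directly from $cI\le U\le dI$, which holds for suitable $0<c\le d$ since $U\in B^{+}_{b,b}(\mathcal{X})$.)

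The argument is short and essentially computational; the only points deserving care are the legitimacy of pulling the bounded operator $U$ through the integral and of the identity $\int_{\Omega}\langle\xi,e_\omega\rangle e_\omega\,d\mu(\omega)=\xi$ — that is, the use of the orthonormal basis in the reconstruction-formula sense already adopted throughout the paper. I do not expect a genuine obstacle beyond this: the substance of the statement is merely that, when the left component of a biframe is an orthonormal basis, the biframe operator encodes the right component completely.
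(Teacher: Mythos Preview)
Your proof is correct and follows essentially the same approach as the paper: both directions identify the biframe operator $\mathcal{T}_{E,\Xi}$ with $U$, using $\mathcal{T}_{E,\Xi}e_k=\Xi_k$ for the forward implication and the computation $\int_\Omega\langle\xi,e_\omega\rangle Ue_\omega\,d\mu(\omega)=U\xi$ for the converse. Your version is slightly more explicit in citing Theorems~\ref{t4} and~\ref{t8} and in noting weak measurability, but the substance is identical.
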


\begin{proof}
	Let $\Xi_{\omega} \in E$. The pair $(\{e_{\omega}\}_{\omega},\{\Xi_{\omega}\}_{\omega})$ is a continuous biframe with biframe operator $\mathcal{T}_{\Xi,G}$,
	\begin{equation*}
	\mathcal{T}_{\Xi,G}e_{k} = \int_{\Omega} \langle e_{k},e_{\omega}\rangle \Xi_{\omega}d\mu(\omega) = \Xi_{k}, \forall k \in \Omega.
	\end{equation*}
	Suppose there exists $U \in B^{+}_{b,b}(\mathcal{X})$ such that $\Xi_{\omega} = Ue_{\omega} \; \forall \omega \in \Omega$.
	
	For $\xi \in \mathcal{X}$,
	\begin{equation*}
	U\xi = \int_{\Omega} \langle \xi,e_{\omega}\rangle Ue_{\omega} d\mu(\omega) = \int_{\Omega} \langle \xi,e_{\omega}\rangle \Xi_{\omega}d\mu(\omega) = \mathcal{T}\xi.
	\end{equation*}
	This implies that $\mathcal{T} \in B^{+}_{b,b}(\mathcal{X})$ and $(\{e_{\omega}\}_{\omega},\{\Xi_{\omega}\}_{\omega})$ is a continuous biframe for $\mathcal{X}$ with biframe operator $\mathcal{T}$. Thus, $\{\Xi_{\omega}\}_{\omega} \in E$.
\end{proof}

\begin{proposition}
	Let $ \{e_{\omega} \}_{\omega \in \Omega} $ be an orthonormal basis for the Hilbert space $ \mathcal{X} $. Consider the sequence $ \{\Xi_{\omega}\}_{\omega} $ as a subset of $ \{e_{\omega}\} $. 
	If there exists another orthonormal basis $ \{\delta_{\omega}\}_{\omega} $ for $ \mathcal{X} $ such that  $ \Xi_{\omega} \in [ \{ \delta_{\omega} \}] $, then it must be that $ e_{\omega} = \delta_{\omega} $ for all $ \omega \in \Omega $.
\end{proposition}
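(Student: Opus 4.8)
The plan is to translate the two membership hypotheses into statements about positive invertible operators by means of Proposition \ref{p2}, and then to extract the conclusion from the uniqueness of positive square roots of positive operators.

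First I would apply Proposition \ref{p2} twice. Since $\{\Xi_\omega\}_\omega \in [\{e_\omega\}]$, there is an operator $U \in B^{+}_{b,b}(\mathcal{X})$ with $\Xi_\omega = U e_\omega$ for all $\omega \in \Omega$; since also $\{\Xi_\omega\}_\omega \in [\{\delta_\omega\}]$, there is $V \in B^{+}_{b,b}(\mathcal{X})$ with $\Xi_\omega = V \delta_\omega$ for all $\omega \in \Omega$. Both $U$ and $V$ are positive and invertible, so $U^{-1}$ and $V^{-1}$ are again positive, hence self-adjoint. Equating the two expressions for $\Xi_\omega$ gives $\delta_\omega = V^{-1} U e_\omega$ for every $\omega$, so setting $W := V^{-1} U \in B(\mathcal{X})$ we have $W e_\omega = \delta_\omega$ for all $\omega \in \Omega$.

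Next I would observe that $W$ sends the orthonormal basis $\{e_\omega\}_\omega$ onto the orthonormal basis $\{\delta_\omega\}_\omega$, so that for all $\omega, \omega' \in \Omega$ one has $\langle W^{\ast}W e_\omega, e_{\omega'}\rangle = \langle \delta_\omega, \delta_{\omega'}\rangle = \langle e_\omega, e_{\omega'}\rangle$, whence $W^{\ast}W = I$; that is, $W$ is an isometry. Using that $U$ and $V$ are self-adjoint (every positive operator on a complex Hilbert space is self-adjoint, as used throughout the paper), $W^{\ast} = (V^{-1}U)^{\ast} = U V^{-1}$, so the identity $W^{\ast}W = I$ becomes $U V^{-2} U = I$. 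Multiplying on the left and on the right by $U^{-1}$ yields $V^{-2} = U^{-2}$, i.e. $(V^{-1})^{2} = (U^{-1})^{2}$ with $V^{-1}, U^{-1}$ positive; by uniqueness of the positive square root, $V^{-1} = U^{-1}$, hence $U = V$ and therefore $W = V^{-1}U = I$. Consequently $\delta_\omega = W e_\omega = e_\omega$ for every $\omega \in \Omega$, which is the assertion.

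The only points requiring a little care are: checking that elements of $B^{+}_{b,b}(\mathcal{X})$ are invertible with positive (hence self-adjoint) inverses, so that $W$ is well defined and $W^{\ast}$ is computed correctly; and invoking uniqueness of positive square roots to pass from $V^{-2} = U^{-2}$ to $U = V$. Everything else is a direct substitution using the two representations of $\Xi_\omega$, so I do not anticipate a serious obstacle. If the hypothesis is meant in the literal sense that $\{\Xi_\omega\}$ is a subfamily of $\{e_\omega\}$, I would first remark that for $(\{e_\omega\}, \{\Xi_\omega\})$ to be a continuous biframe its biframe operator must be invertible, which forces $\{\Xi_\omega\}$ to exhaust $\{e_\omega\}$ and reduces the situation to the case $U = I$ above.
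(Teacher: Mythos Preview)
Your argument is correct and follows the same overall arc as the paper: invoke Proposition \ref{p2} to get $\Xi_\omega = U e_\omega = V \delta_\omega$ with $U,V$ positive invertible, deduce $U^2 = V^2$ (equivalently $U^{-2}=V^{-2}$), and conclude $U=V$ by uniqueness of positive square roots, hence $e_\omega=\delta_\omega$.

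The only genuine difference is in the middle step. You introduce $W=V^{-1}U$, observe it carries one orthonormal basis to another, and use $W^\ast W=I$ to reach $U V^{-2} U = I$. The paper instead computes the frame operator of $\{\Xi_\omega\}$ in two ways:
\[
\mathcal{T}_\Xi \xi = \int_\Omega \langle \xi, U e_\omega\rangle U e_\omega\, d\mu(\omega) = U^2\xi, \qquad
\mathcal{T}_\Xi \xi = \int_\Omega \langle \xi, V \delta_\omega\rangle V \delta_\omega\, d\mu(\omega) = V^2\xi,
\]
which gives $U^2=V^2$ directly without introducing $W$ or invoking any isometry. The paper's route is slightly more economical and avoids the adjoint computation $W^\ast = U V^{-1}$; your route has the mild advantage of making explicit why $e_\omega=\delta_\omega$ follows (via $W=I$), whereas the paper leaves the final implication $U=V \Rightarrow e_\omega=\delta_\omega$ implicit (one still needs to apply $U^{-1}$ to $Ue_\omega = U\delta_\omega$). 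Both are entirely sound.
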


	\begin{proof}
		According to Proposition \ref{p2}, there exist operators $U, V \in B^{+}_{b,b}(\mathcal{X})$ such that
		
		$ \Xi_{\omega} = U e_{\omega}$ and $ \Xi_{\omega} = V \delta_{\omega}$.
		We note that $ \Xi = \{\Xi_{\omega}\}_{\omega}$ is a frame for $ \mathcal{X}$ with a continuous biframe operator $ \mathcal{T}_{\Xi}$. For any element $ \xi \in \mathcal{X}$,
		\begin{equation*}
		\mathcal{T}_{\Xi}\xi = \int_{\Omega} \langle \xi, \Xi_{\omega} \rangle \Xi_{\omega} d\mu(\omega) = \int_{\Omega} \langle \xi, Ue_{\omega}\rangle Ue_{\omega} d\mu(\omega) = U^{2}\xi.
		\end{equation*}
		Similarly, $ \mathcal{T}_{\Xi}\xi = V^{2}\xi$ leads to the conclusion that $U^{2} = V^{2}$ and therefore $U = V$.
		
		Consequently, for each $ \omega \in \Omega$, it must be that $ e_{\omega} = \delta_{\omega}$.
	\end{proof}

We present a novel category of sequences.
\begin{definition}
	A sequence $ \{\Xi_{\omega}\}_{\omega} $ in $ \mathcal{X} $ is termed a continuous biframe-Riesz basis, or briefly b-Riesz, for $ \mathcal{X} $, if it is associated with an orthonormal basis $ \{e_{\omega}\}_{\omega} $ for $ \mathcal{X} $. This association is such that the pair $ \{\{e_{\omega}\}_{\omega}, \{\Xi_{\omega}\}_{\omega}\} $ forms a continuous biframe for $ \mathcal{X} $.
\end{definition}

\begin{theorem} \label{t21}
	Let $ \{\Xi_{\omega}\}_{\omega} $ be a sequence in $ \mathcal{X} $. Then the following statements are equivalent:
	\begin{itemize}
		\item [(i)] $ \{\Xi_{\omega}\}_{\omega} $ is a continuous biframe-Riesz basis.
		\item [(ii)] $ \{\Xi_{\omega}\}_{\omega} $ is a member of $ E $ where $ E=\{e_{\omega}\}_{\omega} $ is an orthonormal basis.
		\item [(iii)] There exists an orthonormal basis $ \{e_{\omega}\}_{\omega} $ and an operator $ U \in B^{+}_{b,b}(\mathcal{X}) $ such that $ \{\Xi_{\omega} \}_{\omega} = \{Ue_{\omega}\}_{\omega} $.
		\item[(iv)] $ \{\Xi_{\omega}\} _{\omega} \in E $ for $ E $  an orthonormal basis.
	\end{itemize}
\end{theorem}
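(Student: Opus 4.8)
The plan is to observe that statements (i), (ii) and (iv) are merely verbal repackagings of a single notion, and that the only substantive link — the passage to the operator-theoretic description (iii) — is already supplied by Proposition \ref{p2}. So the whole theorem reduces to unwinding definitions and invoking that proposition.

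First I would prove the block (i) $\Leftrightarrow$ (ii) $\Leftrightarrow$ (iv). By the definition of a continuous biframe-Riesz basis, $\{\Xi_{\omega}\}_{\omega}$ is b-Riesz precisely when there is an orthonormal basis $\{e_{\omega}\}_{\omega}$ of $\mathcal{X}$ for which $(\{e_{\omega}\}_{\omega},\{\Xi_{\omega}\}_{\omega})$ is a continuous biframe, i.e.\ precisely when $\{\Xi_{\omega}\}_{\omega}\in[\{e_{\omega}\}]$ for some orthonormal basis $\{e_{\omega}\}_{\omega}$. Reading $E$ as this set $[\{e_{\omega}\}]$ attached to an orthonormal basis, statements (ii) and (iv) are word-for-word restatements of that membership; hence the three are equivalent with no computation involved.

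Next I would establish (ii) $\Leftrightarrow$ (iii), which is exactly Proposition \ref{p2}: $\{\Xi_{\omega}\}_{\omega}$ lies in $[\{e_{\omega}\}]$ if and only if there is an operator $U\in B^{+}_{b,b}(\mathcal{X})$ with $\Xi_{\omega}=Ue_{\omega}$ for every $\omega\in\Omega$ (and in that case $U$ is the biframe operator $\mathcal{T}_{e,\Xi}$). Applying Proposition \ref{p2} to the orthonormal basis provided by (ii) yields (iii); conversely, given the orthonormal basis and the operator $U\in B^{+}_{b,b}(\mathcal{X})$ of (iii), Proposition \ref{p2} returns $\{\Xi_{\omega}\}_{\omega}\in[\{e_{\omega}\}]$, which is (ii)/(iv). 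Chaining the two equivalence blocks closes the cycle among all four statements.

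The only real subtlety — hardly an obstacle — is bookkeeping with the notation: one must interpret $E$ uniformly as $[\{e_{\omega}\}]$ for an unspecified orthonormal basis, and remember that in the ``there exists'' formulations the orthonormal basis is permitted to depend on the sequence $\{\Xi_{\omega}\}_{\omega}$. Once that is pinned down, the theorem is an immediate corollary of the definition of a continuous biframe-Riesz basis together with Proposition \ref{p2}, and no new estimate or construction is required.
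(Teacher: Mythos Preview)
Your proposal is correct and matches the paper's treatment: the paper states Theorem \ref{t21} without proof, evidently regarding it as an immediate consequence of the definition of a continuous biframe-Riesz basis together with Proposition \ref{p2}, which is precisely the reduction you carry out. Your observation that (i), (ii), (iv) are definitional restatements and that (ii) $\Leftrightarrow$ (iii) is Proposition \ref{p2} is exactly the intended argument.
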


		Drawing from Proposition \ref{p2}, we infer that every orthonormal basis qualifies as both a biframe-Riesz and a Riesz basis.
	\begin{example}
		Consider the Riesz basis $ \{\Xi_{\omega}\}_{\omega} = \{(1,-2), (-1,0)\} $ for $ \mathbb{R}^{2} $ and an orthonormal basis $ \{e_{\omega}\}_{\omega} = \left\{\left(\frac{1}{2}, \frac{\sqrt{3}}{2}\right), \left(-\frac{\sqrt{3}}{2}, \frac{1}{2}\right)\right\} $ for $ (x,y) \in \mathbb{R}^{2} $,
		\begin{equation*}
		\langle (x,y),\left(\frac{1}{2}, \frac{\sqrt{3}}{2}\right)\rangle \langle (1,-2), (x,y)\rangle + \langle (x,y),\left(\frac{1}{2}, \frac{\sqrt{3}}{2}\right)\rangle \langle (-1,0), (x,y)\rangle = -xy - \sqrt{3}y^{2}.
		\end{equation*}
		We observe that $ (\{e_{\omega}\}, \{\Xi_{\omega}\}) $ forms a biframe for $ \mathbb{R}^{2} $.
	\end{example}
	
\begin{proposition}
	Let $ \{e_{\omega}\}_{\omega} $ be an orthonormal basis for $ \mathcal{X} $, and suppose $ \{\Xi_{\omega}\}_{\omega} = \{Ue_{\omega}\}_{\omega} $ and $ \{\Phi_{\omega}\}_{\omega} = \{Ve_{\omega}\}_{\omega} $ are elements of $ [\{e_{\omega}\}] $. If the operator $ VU $ is positive, then $ (\{\Xi_{\omega}\}_{\omega}, \{\Phi_{\omega}\}_{\omega}) $ constitutes a continuous biframe for $ \mathcal{X} $.
\end{proposition}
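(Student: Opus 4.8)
The plan is to compute the continuous biframe operator of the pair $(\{\Xi_{\omega}\}_{\omega},\{\Phi_{\omega}\}_{\omega})$ in closed form and then invoke Theorem \ref{t8}, which characterises continuous biframes by positivity and boundedness of this operator.

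First I would record what membership in $[\{e_{\omega}\}]$ supplies. By Proposition \ref{p2}, $\{\Xi_{\omega}\}_{\omega}=\{Ue_{\omega}\}_{\omega}\in[\{e_{\omega}\}]$ and $\{\Phi_{\omega}\}_{\omega}=\{Ve_{\omega}\}_{\omega}\in[\{e_{\omega}\}]$ force $U,V\in B^{+}_{b,b}(\mathcal{X})$. In particular $U$ and $V$ are positive, hence self-adjoint, so $U^{\ast}=U$, and moreover they are invertible. Weak measurability of $\Xi$ and $\Phi$ is then automatic, since $\omega\mapsto\langle\xi,Ue_{\omega}\rangle=\langle U\xi,e_{\omega}\rangle$ and $\omega\mapsto\langle\xi,Ve_{\omega}\rangle=\langle V\xi,e_{\omega}\rangle$ are measurable whenever $\omega\mapsto\langle\,\cdot\,,e_{\omega}\rangle$ is.

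Next, for $\xi\in\mathcal{X}$ I would compute
\begin{equation*}
\mathcal{T}_{\Xi,\Phi}\xi=\int_{\Omega}\langle\xi,\Xi_{\omega}\rangle\Phi_{\omega}\,d\mu(\omega)=\int_{\Omega}\langle U\xi,e_{\omega}\rangle Ve_{\omega}\,d\mu(\omega)=V\int_{\Omega}\langle U\xi,e_{\omega}\rangle e_{\omega}\,d\mu(\omega)=VU\xi,
\end{equation*}
using $U^{\ast}=U$, the boundedness of $V$ to bring it outside the integral, and the reconstruction identity $\int_{\Omega}\langle\eta,e_{\omega}\rangle e_{\omega}\,d\mu(\omega)=\eta$ for the orthonormal basis $\{e_{\omega}\}_{\omega}$ already used in the proof of Proposition \ref{p2}. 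Hence $\mathcal{T}_{\Xi,\Phi}=VU$, which is bounded as a composition of bounded operators and positive by hypothesis; since $U$ and $V$ are invertible it is also bounded below, so it lies in $GL^{+}(\mathcal{X})$. Applying Theorem \ref{t8} to the sequences $\{\Xi_{\omega}\}_{\omega}$ and $\{\Phi_{\omega}\}_{\omega}$ then shows that $(\{\Xi_{\omega}\}_{\omega},\{\Phi_{\omega}\}_{\omega})$ is a continuous biframe for $\mathcal{X}$.

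The step needing care is the identification $\mathcal{T}_{\Xi,\Phi}=VU$: one must keep the adjoint straight (it is $U$, not $V$, that gets transferred across the inner product), and one must remember that the product $VU$ of two positive operators is in general \emph{not} self-adjoint — which is exactly why the hypothesis that $VU$ be positive cannot be dropped. Once $\mathcal{T}_{\Xi,\Phi}=VU$ is established, the conclusion is immediate from Theorem \ref{t8}.
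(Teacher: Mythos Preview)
Your proof is correct and follows essentially the same route as the paper's: identify $\mathcal{T}_{\Xi,\Phi}=VU$ via the reconstruction identity for the orthonormal basis (using $U^{\ast}=U$), and then conclude from the assumed positivity of $VU$ together with its boundedness and invertibility. The paper cites Theorem~\ref{t21} where you cite Proposition~\ref{p2}, but these yield the same fact that $U,V\in B^{+}_{b,b}(\mathcal{X})$, and your explicit invocation of Theorem~\ref{t8} matches the paper's implicit use of the criterion $\mathcal{T}\in B^{+}_{b,b}(\mathcal{X})$.
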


\begin{proof}
	Referencing Theorem \ref{t21}, we have operators $ U, V \in B^{+}_{b,b}(\mathcal{X}) $,
	for any element $ \xi \in \mathcal{X} $,
	\begin{equation*}
	VU\xi = VU \int_{\Omega} \langle \xi, e_{\omega}\rangle e_{\omega} d\mu(\omega) = \int_{\Omega} \langle \xi, Ue_{\omega} \rangle Ve_{\omega} d\mu(\omega) = \int_{\Omega} \langle \xi, \Xi_{\omega}\rangle \Phi_{\omega} d\mu(\omega) = \mathcal{T}\xi.    
	\end{equation*}
	If $ VU $ is positive, then $ \mathcal{T} \in B^{+}_{b,b}(\mathcal{X}) $ and thus $ (\{\Xi_{\omega}\}_{\omega}, \{\Phi_{\omega}\}_{\omega}) $ forms a continuous biframe for $ \mathcal{X} $.
\end{proof}

We explore the characteristics of a continuous biframe where one of the component sequences is a continuous biframe-Riesz basis for $ \mathcal{X} $.

\begin{theorem}
	Suppose $ \{\Xi_{\omega}\}_{\omega} $ is a continuous biframe-Riesz basis for $ \mathcal{X} $, and $ \{\Phi_{\omega}\}_{\omega} $ is another sequence in $ \mathcal{X} $. The following conditions are then equivalent:
	\begin{itemize}
		\item [(i)] The pair $ (\{\Xi_{\omega}\}_{\omega}, \{\Phi_{\omega}\}_{\omega}) $ constitutes a continuous biframe for $ \mathcal{X} $.
		\item [(ii)] There exist operators $ U, V \in B^{+}_{b,b}(\mathcal{X}) $ such that $ \Xi_{\omega} = Ue_{\omega} $ and $ \Phi_{\omega} = VU^{-1}e_{\omega} $, where $ \{e_{\omega}\}_{\omega} $ represents an orthonormal basis for $ \mathcal{X} $ for every $ \omega \in \Omega $.
	\end{itemize}
\end{theorem}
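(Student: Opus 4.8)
The plan is to push both implications through the continuous biframe operator $\mathcal{T}_{\Xi,\Phi}$, exactly in the spirit of the proofs of Theorem \ref{t3.10} and Theorem \ref{t8}. The starting point is that, since $\{\Xi_{\omega}\}_{\omega}$ is a continuous biframe-Riesz basis, Theorem \ref{t21} provides an orthonormal basis $\{e_{\omega}\}_{\omega}$ for $\mathcal{X}$ and an operator $U\in B^{+}_{b,b}(\mathcal{X})$ (so $U=U^{\ast}$ and $U$ is invertible) with $\Xi_{\omega}=Ue_{\omega}$ for every $\omega$. Everything then reduces to identifying the biframe operator with the operator $V$ occurring in (ii).

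For $(i)\Rightarrow(ii)$ I would fix the orthonormal basis $\{e_{\omega}\}_{\omega}$ and the operator $U\in B^{+}_{b,b}(\mathcal{X})$ furnished by Theorem \ref{t21}, and set $V:=\mathcal{T}_{\Xi,\Phi}$. By Theorem \ref{t4}(i) this $V$ is bounded, positive and invertible, hence $V\in B^{+}_{b,b}(\mathcal{X})$. It then remains to verify $\Phi_{\omega}=VU^{-1}e_{\omega}$, which is the computation from the proof of Theorem \ref{t3.10}: for each $k\in\Omega$,
\begin{align*}
VU^{-1}e_{k}=\mathcal{T}_{\Xi,\Phi}U^{-1}e_{k}
&=\int_{\Omega}\langle U^{-1}e_{k},\Xi_{\omega}\rangle\Phi_{\omega}\,d\mu(\omega)
=\int_{\Omega}\langle U^{-1}e_{k},Ue_{\omega}\rangle\Phi_{\omega}\,d\mu(\omega)\\
&=\int_{\Omega}\langle UU^{-1}e_{k},e_{\omega}\rangle\Phi_{\omega}\,d\mu(\omega)
=\int_{\Omega}\langle e_{k},e_{\omega}\rangle\Phi_{\omega}\,d\mu(\omega)=\Phi_{k},
\end{align*}
where self-adjointness of $U$ is used to move it across the inner product. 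This yields (ii).

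For $(ii)\Rightarrow(i)$, given $U,V\in B^{+}_{b,b}(\mathcal{X})$ with $\Xi_{\omega}=Ue_{\omega}$ and $\Phi_{\omega}=VU^{-1}e_{\omega}$, I would compute the candidate biframe operator directly: for any $\xi\in\mathcal{X}$,
\begin{align*}
\mathcal{T}_{\Xi,\Phi}\xi=\int_{\Omega}\langle\xi,\Xi_{\omega}\rangle\Phi_{\omega}\,d\mu(\omega)
&=\int_{\Omega}\langle\xi,Ue_{\omega}\rangle\,VU^{-1}e_{\omega}\,d\mu(\omega)
=VU^{-1}\int_{\Omega}\langle U\xi,e_{\omega}\rangle e_{\omega}\,d\mu(\omega)\\
&=VU^{-1}U\xi=V\xi,
\end{align*}
using $U=U^{\ast}$ and the orthonormal-basis reconstruction $\int_{\Omega}\langle\eta,e_{\omega}\rangle e_{\omega}\,d\mu(\omega)=\eta$. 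Hence $\mathcal{T}_{\Xi,\Phi}=V$ is positive and bounded, and Theorem \ref{t8} gives that $(\{\Xi_{\omega}\}_{\omega},\{\Phi_{\omega}\}_{\omega})$ is a continuous biframe, completing the equivalence.

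The \textbf{main obstacle} is purely a matter of bookkeeping rather than of any substantial idea: one must make sure that the orthonormal basis appearing in (ii) is precisely the one underlying the b-Riesz structure of $\{\Xi_{\omega}\}_{\omega}$, that $U$ and $V$ are genuinely invertible so that $U^{-1}$ is well defined and $VU^{-1}\in B^{+}_{b,b}(\mathcal{X})$, and that the interchange of the integral with the bounded operators $U^{-1}$ and $V$ is legitimate (which follows from the weak measurability and the Bessel-type bounds already built into the definition of a continuous biframe). The membership $\mathcal{T}_{\Xi,\Phi}\in B^{+}_{b,b}(\mathcal{X})$ is exactly Theorem \ref{t4}.
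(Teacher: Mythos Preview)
Your proposal is correct and follows essentially the same approach as the paper's own proof: in both directions you identify the continuous biframe operator $\mathcal{T}_{\Xi,\Phi}$ with $V$, using Theorem \ref{t21} to write $\Xi_{\omega}=Ue_{\omega}$ and then performing the same orthonormal-basis computations to obtain $\Phi_{k}=\mathcal{T}_{\Xi,\Phi}U^{-1}e_{k}$ for $(i)\Rightarrow(ii)$ and $\mathcal{T}_{\Xi,\Phi}=V$ for $(ii)\Rightarrow(i)$, concluding via Theorem \ref{t8}. Your version is in fact slightly more explicit in citing Theorem \ref{t4} to justify $V:=\mathcal{T}_{\Xi,\Phi}\in B^{+}_{b,b}(\mathcal{X})$.
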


	\begin{proof}
		For the implication $ (i) \Rightarrow (ii) $:
		
		Assume that $ (\{\Xi_{\omega}\}_{\omega}, \{\Phi_{\omega}\}_{\omega}) $ is a continuous biframe with biframe operator $ \mathcal{T}_{\Xi,\Phi} \in B^{+}_{b,b}(\mathcal{X}) $. Given that $ \{\Xi_{\omega}\}_{\omega}$ is a continuous biframe-Riesz basis, and by Theorem \ref{t21}, there exists an orthonormal basis $ \{e_{\omega}\}_{\omega} $ for $\mathcal{X} $ and an operator $ U \in B^{+}_{b,b}(\mathcal{X})$ such that $ \Xi_{\omega} = Ue_{\omega} $, for all $\omega \in \Omega $.
		
		For $ \xi \in \mathcal{X} $,
		\begin{equation*}
		\mathcal{T}_{\Xi,\Phi}\xi = \int_{\Omega}\langle \xi, \Xi_{\omega}\rangle \Phi_{\omega}d\mu(\omega) = \int_{\Omega}\langle \xi, Ue_{\omega}\rangle \Phi_{\omega}d\mu(\omega) = \int_{\Omega}\langle U\xi, e_{\omega}\rangle \Phi_{\omega}d\mu(\omega),
		\end{equation*}
		and for   $ k \in \Omega $,
		\begin{equation*}
		\mathcal{T}_{\Xi,\Phi}U^{-1}e_{k} = \int_{\Omega}\langle UU^{-1}e_{k}, e_{\omega}\rangle \Phi_{\omega}d\mu(\omega) = \int_{\Omega}\langle e_{k}, e_{\omega}\rangle \Phi_{\omega}d\mu(\omega) = \Phi_{k}.
		\end{equation*}
		
		Now, assume that $ (ii) $ is true.
		
		For $ \xi \in \mathcal{X} $,
		\begin{equation*}
		\int_{\Omega}\langle \xi, \Xi_{\omega} \rangle \Phi_{\omega}d\mu(\omega) = \int_{\Omega} \langle \xi, Ue_{\omega} \rangle VU^{-1}e_{\omega}d\mu(\omega) = VU^{-1}\int_{\Omega} \langle U\xi, e_{\omega} \rangle e_{\omega}d\mu(\omega) = VU^{-1}U\xi = V\xi.
		\end{equation*}
		We conclude that $ \mathcal{T}_{\Xi,\Phi} = V \in B^{+}_{b,b}(\mathcal{X}) $, and by Theorem \ref{t8}, $ (\{\Xi_{\omega}\}_{\omega}, \{\Phi_{\omega}\}_{\omega}) $ is a continuous biframe.
	\end{proof}
	
	\begin{corollary} \label{c26}
		Suppose $ \{\Xi_{\omega}\}_{\omega} $ is a continuous biframe-Riesz basis for $ \mathcal{X} $ and $ \{\Phi_{\omega}\}_{\omega} $ is a sequence in $ \mathcal{X} $. Then the following statements are equivalent:
		\begin{itemize}
			\item [(i)] $ (\{\Xi_{\omega}\}_{\omega}, \{\Phi_{\omega}\}_{\omega}) $ forms a continuous Parseval biframe for $ \mathcal{X} $.
			\item [(ii)] There exists $ U \in B^{+}_{b,b}(\mathcal{X}) $ such that $ \Xi_{\omega} = Ue_{\omega} $ and $ \Phi_{\omega} = U^{-1}e_{\omega} $, where $ \{e_{\omega}\}_{\omega} $ is an orthonormal basis for $ \mathcal{X} $ applicable to every $ \omega \in \Omega $.
		\end{itemize}
	\end{corollary}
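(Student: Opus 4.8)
The plan is to obtain this corollary as the Parseval specialization of the theorem just proved, so the argument will be short. The one conceptual ingredient beyond that theorem is the translation of the Parseval condition into a statement about the biframe operator, and I would isolate this first.

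I would begin by showing that a continuous biframe $(\Xi,\Phi)$ is Parseval if and only if its biframe operator satisfies $\mathcal{T}_{\Xi,\Phi}=\mathcal{I}$. By Definition \ref{d1}, the Parseval requirement $C=D=1$ says exactly that $\langle \mathcal{T}_{\Xi,\Phi}\xi,\xi\rangle=\int_{\Omega}\langle\xi,\Xi_\omega\rangle\langle\Phi_\omega,\xi\rangle\,d\mu(\omega)=\|\xi\|^{2}$ for all $\xi\in\mathcal{X}$; since $\mathcal{T}_{\Xi,\Phi}$ is positive, hence self-adjoint in the complex Hilbert space $\mathcal{X}$, the vanishing of $\langle(\mathcal{T}_{\Xi,\Phi}-\mathcal{I})\xi,\xi\rangle$ for every $\xi$ forces $\mathcal{T}_{\Xi,\Phi}=\mathcal{I}$, and the converse is trivial.

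For $(i)\Rightarrow(ii)$ I would note that a Parseval biframe is in particular a continuous biframe, so the preceding theorem provides operators $U,V\in B^{+}_{b,b}(\mathcal{X})$ and an orthonormal basis $\{e_\omega\}_\omega$ with $\Xi_\omega=Ue_\omega$ and $\Phi_\omega=VU^{-1}e_\omega$. Re-running the computation from that proof gives $\mathcal{T}_{\Xi,\Phi}\xi=\int_{\Omega}\langle\xi,Ue_\omega\rangle VU^{-1}e_\omega\,d\mu(\omega)=VU^{-1}U\xi=V\xi$, so $\mathcal{T}_{\Xi,\Phi}=V$; combined with the previous paragraph this yields $V=\mathcal{I}$, hence $\Phi_\omega=U^{-1}e_\omega$, which is $(ii)$. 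For $(ii)\Rightarrow(i)$ I would substitute $V=\mathcal{I}$ into the same computation: with $\Xi_\omega=Ue_\omega$ and $\Phi_\omega=U^{-1}e_\omega$ one gets $\mathcal{T}_{\Xi,\Phi}\xi=U^{-1}U\xi=\xi$, so $\mathcal{T}_{\Xi,\Phi}=\mathcal{I}$ is positive and bounded; Theorem \ref{t8} then makes $(\Xi,\Phi)$ a continuous biframe, and by the first paragraph it is Parseval.

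I do not expect a real obstacle: once the previous theorem is available the corollary is pure bookkeeping. The only step needing a little care is the equivalence ``Parseval $\Leftrightarrow\mathcal{T}_{\Xi,\Phi}=\mathcal{I}$,'' which uses that a positive operator on a complex Hilbert space is self-adjoint, so that $\langle A\xi,\xi\rangle=\langle\xi,\xi\rangle$ for all $\xi$ upgrades to $A=\mathcal{I}$; after that, plugging $V=\mathcal{I}$ into $\Phi_\omega=VU^{-1}e_\omega$ finishes both directions.
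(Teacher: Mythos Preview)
Your proposal is correct and is exactly the argument the paper intends: Corollary~\ref{c26} is stated in the paper without any proof, as an immediate specialization of the preceding theorem to the Parseval case $V=\mathcal{I}$. Your write-up simply makes explicit the one extra step the paper leaves to the reader, namely that the Parseval condition is equivalent to $\mathcal{T}_{\Xi,\Phi}=\mathcal{I}$, and then reads off both implications from the identity $\mathcal{T}_{\Xi,\Phi}=V$ established in that theorem's proof.
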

	
	\begin{corollary}
		If $ \{\Xi_{\omega}\}_{\omega} $ is a continuous biframe-Riesz basis for $ \mathcal{X} $, then both the canonical dual of $ \{\Xi_{\omega}\}_{\omega} $ and any biorthogonal frame to $ \{\Xi_{\omega}\}_{\omega} $ also qualify as continuous biframe-Riesz bases for $ \mathcal{X} $.
	\end{corollary}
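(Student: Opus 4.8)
The plan is to reduce the statement to the structural normal form for continuous biframe-Riesz bases supplied by Theorem~\ref{t21}. By that theorem the hypothesis provides an orthonormal basis $\{e_{\omega}\}_{\omega}$ of $\mathcal{X}$ and an operator $U\in B^{+}_{b,b}(\mathcal{X})$ with $\Xi_{\omega}=Ue_{\omega}$ for all $\omega\in\Omega$. Since $U$ is positive (hence self-adjoint) and invertible, $U^{-1}$ is again positive, bounded and invertible, so $U^{-1}\in B^{+}_{b,b}(\mathcal{X})$. It therefore suffices to express the canonical dual and any biorthogonal frame in the form $\{Ve_{\omega}\}_{\omega}$ with $V\in B^{+}_{b,b}(\mathcal{X})$, and then invoke the implication $(iii)\Rightarrow(i)$ of Theorem~\ref{t21}.

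First I would compute the frame operator $\mathcal{T}_{\Xi}$ of $\{\Xi_{\omega}\}_{\omega}$. For $\xi\in\mathcal{X}$,
\[
\mathcal{T}_{\Xi}\xi=\int_{\Omega}\langle \xi,\Xi_{\omega}\rangle \Xi_{\omega}\,d\mu(\omega)=U\int_{\Omega}\langle U\xi,e_{\omega}\rangle e_{\omega}\,d\mu(\omega)=U^{2}\xi,
\]
using $U^{\ast}=U$ together with the reconstruction property of the orthonormal basis $\{e_{\omega}\}_{\omega}$. Hence $\mathcal{T}_{\Xi}^{-1}=U^{-2}$ and the canonical dual is
\[
\{\mathcal{T}_{\Xi}^{-1}\Xi_{\omega}\}_{\omega}=\{U^{-2}Ue_{\omega}\}_{\omega}=\{U^{-1}e_{\omega}\}_{\omega}.
\]
Because $U^{-1}\in B^{+}_{b,b}(\mathcal{X})$, Theorem~\ref{t21} yields that the canonical dual is itself a continuous biframe-Riesz basis for $\mathcal{X}$.

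Next I would handle an arbitrary biorthogonal frame $\{\Phi_{\omega}\}_{\omega}$, characterized by $\langle \Xi_{i},\Phi_{j}\rangle=\delta_{i,j}$. Rewriting this as $\langle e_{i},U\Phi_{j}\rangle=\langle e_{i},e_{j}\rangle$ for every $i\in\Omega$ and invoking the completeness of $\{e_{\omega}\}_{\omega}$ forces $U\Phi_{j}=e_{j}$, hence $\Phi_{j}=U^{-1}e_{j}$ for all $j\in\Omega$. Thus every biorthogonal frame to $\{\Xi_{\omega}\}_{\omega}$ is precisely the canonical dual $\{U^{-1}e_{\omega}\}_{\omega}$, already shown to be a b-Riesz basis. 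I expect no real obstacle: the only points needing a word of care are the membership $U^{-1}\in B^{+}_{b,b}(\mathcal{X})$, which is immediate from $U\in GL^{+}(\mathcal{X})$, and the passage from the scalar biorthogonality identities to the operator identity $U\Phi_{j}=e_{j}$ via the totality of $\{e_{\omega}\}_{\omega}$.
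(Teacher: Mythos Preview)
Your argument is correct. You use Theorem~\ref{t21} to write $\Xi_{\omega}=Ue_{\omega}$ with $U\in B^{+}_{b,b}(\mathcal{X})$, compute $\mathcal{T}_{\Xi}=U^{2}$ directly, identify the canonical dual as $\{U^{-1}e_{\omega}\}_{\omega}$, and then show that the biorthogonality relations force any biorthogonal frame to coincide with this same system; the conclusion then follows from the implication $(iii)\Rightarrow(i)$ of Theorem~\ref{t21} applied to $U^{-1}$.

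The paper proceeds differently. Its proof environment contains only the remark that a Riesz basis has a unique dual (the canonical one), and the remaining work is delegated to Corollaries~\ref{c13} and~\ref{c26}: Corollary~\ref{c13} asserts that dual frames and biorthogonal frames form continuous Parseval biframes with $\{\Xi_{\omega}\}_{\omega}$, and Corollary~\ref{c26} then identifies the second component of such a Parseval biframe as $\{U^{-1}e_{\omega}\}_{\omega}$, which is a b-Riesz basis. Your route is more self-contained and elementary---it bypasses the Parseval-biframe machinery entirely and goes straight to the normal form---while the paper's route has the virtue of showing the result as an immediate consequence of the Parseval characterization already established. Both arrive at the same concrete identity $\Phi_{\omega}=U^{-1}e_{\omega}$.
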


	\begin{proof}
		If $ \{\Xi_{\omega}\}_{\omega} $ is a Riesz basis, its sole dual frame is the canonical dual frame.
	\end{proof}
	
	Following Corollaries \ref{c13} and \ref{c26}, it is established that every dual of $ \{\Xi_{\omega}\}_{\omega} $ and every biorthogonal frame to $ \{\Xi_{\omega}\}_{\omega} $ constitute a continuous Parseval biframe with $ \{\Xi_{\omega}\}_{\omega} $. These frames are continuous biframe-Riesz bases for $ \mathcal{X} $.
	
	\section*{Declarations}
	
	\medskip
	
	\noindent \textbf{Availablity of data and materials}\newline
	\noindent Not applicable.
	
	\medskip
	
	\noindent \textbf{Human and animal rights}\newline
	\noindent We would like to mention that this article does not contain any studies
	with animals and does not involve any studies over human being.

	\medskip

	\noindent \textbf{Conflict of interest}\newline
	\noindent The authors declare that they have no competing interests.

	\medskip
	
	\noindent \textbf{Fundings} \newline
	\noindent The authors declare that there is no funding available for this paper.

	\medskip
	
	\noindent \textbf{Authors' contributions}\newline
	\noindent The authors equally conceived of the study, participated in its
	design and coordination, drafted the manuscript, participated in the
	sequence alignment, and read and approved the final manuscript. 
	
	\medskip
	
	\bibliographystyle{amsplain}
	
\end{document}